
\documentclass[11pt]{amsart}
\usepackage{amssymb}
\theoremstyle{plain}
\newtheorem{theorem}{Theorem}
\newtheorem{corollary}{Corollary}

\newtheorem{lemma}{Lemma}

\newtheorem{example}{Example}
\theoremstyle{definition}

\theoremstyle{remark}

\numberwithin{equation}{section}

\setbox0=\hbox{$+$}
\newdimen\plusheight
\plusheight=\ht0
\def\+{\;\lower\plusheight\hbox{$+$}\;}

\setbox0=\hbox{$-$}
\newdimen\minusheight
\minusheight=\ht0
\def\-{\;\lower\minusheight\hbox{$-$}\;}

\setbox0=\hbox{$\cdots$}
\newdimen\cdotsheight
\cdotsheight=\plusheight
\def\cds{\lower\cdotsheight\hbox{$\cdots$}}

\begin{document}
\title[  Tasoevian and Hurwitzian Continued Fractions]
 {Some new Families of Tasoevian- and Hurwitzian Continued Fractions }
\author{James Mc Laughlin}
\address{Mathematics Department\\
 Anderson Hall\\
West Chester University, West Chester, PA 19383}
\email{jmclaughl@wcupa.edu}

 \keywords{Continued Fractions, Tasoevian Continued Fractions, Hurwitzian Continued Fractions}
\subjclass{Primary:11A55}
\date{\today}
\begin{abstract}
We derive closed-form expressions for several new classes of
Hurwitzian- and Tasoevian  continued fractions, including
\[[0;\overline{p-1,1,u(a+2nb)-1,p-1,1,v(a+(2n+1)b)-1
}\,\,]_{n=0}^{\infty}, \]
 $[0; \overline{c + d
m^{n}}]_{n=1}^{\infty}$ and $[0; \overline{e u^{n},  f v^{
n}}]_{n=1}^{\infty}$. One of the constructions used to produce some
of these continued fractions can be iterated to produce both
Hurwitzian- and Tasoevian continued fractions of arbitrary long
quasi-period, with arbitrarily many free parameters and whose limits
can be determined as ratios of certain infinite series.

We also derive expressions for arbitrarily long \emph{finite}
continued fractions whose partial quotients lie in arithmetic
progressions.
\end{abstract}

\maketitle

\section{Introduction}
In this paper we exhibit several new infinite families of regular
continued fraction of \emph{Hurwitzian}- and \emph{Tasoevian} type,
continued fractions whose value can expressed in terms of certain
infinite series.

Hurwitzian continued fractions (\cite{H96}, \cite{H33}) are of the
form
\begin{multline*}
[a_{0}; a_{1}, \cdots , a_{k},
 f_{1}(1), \cdots , f_{n}(1), f_{1}(2), \cdots , f_{n}(2),\cdots\,]\\
=: [a_{0}; a_{1}, \cdots , a_{k},\overline{
 f_{1}(m), \cdots , f_{n}(m)}\,]_{m=1}^{\infty}.
\end{multline*}
Here the $f_{i}(x)$ are polynomials with rational coefficients
taking only positive integral values for integral $x\geq 1$ and at
least one is non-constant. The integer $n$ is termed the
\emph{quasi-period} of the continued fraction. The closed form for
Hurwitzian continued fractions is not known in general. This class
contains numbers like {\allowdisplaybreaks
\begin{align*}
e &=
 [2;1,2,1,1,4,1,1,6,1,\dots]=[2;
 \overline{1,2m,1}\,]_{m=1}^{\infty},\\
 \tan 1 &=[1;1,1,3,1,5,1,7,1,9,\dots]= [1;\overline{2m-1,1}]_{m=1}^{\infty}
\end{align*}
} These continued fractions were also investigated by D. N. Lehmer
\cite{L18} and more recently by Komatsu in \cite{K03b}, \cite{K05},
\cite{K06}, \cite{K07} and \cite{K08}. A nice example that follows
from Lambert's continued fraction  \cite{L70}
\begin{equation}\label{lambert}
\frac{e^z-e^{-z}}{e^z-e^{-z}} = \frac{z}{1}\+\frac{z^2}{3}\+
\frac{z^2}{5}\+\frac{z^2}{7}\+\cds
\end{equation}
is the following (see also  \cite{K03b}): {\allowdisplaybreaks
\[
\sqrt{\frac{v}{u}}\tan \frac{1}{\sqrt{uv}} = [0;u-1,
\overline{1,(4k-1)v-2, 1, (4k+1)u-2}]_{k=1}^{\infty}.
\]
}

A a sub-class of Hurwitzian continued fractions (with all
polynomials $f_i(x)$ of degree 1) is due to D.H. Lehmer \cite{L73},
who found closed forms for the numbers represented by regular
continued fractions whose partial quotients were  terms in an
arithmetic progression,
\begin{equation}\label{lehmer1}
[0;a,a+b,a+2b,a+3b,\cdots ]=
\frac{I_{(a/b)}(2/b)}{I_{(a/b)-1}(2/b)},
\end{equation}
where
\[
I_{\nu}(z)=\sum_{m=0}^{\infty}\frac{(z/2)^{\nu+2m}}{\Gamma (m+1)
\Gamma(\nu+m+1)}.
\]
More transparently,
\[
[0;a,a+b,a+2b,a+3b,\cdots ]=\frac{1}{b} \frac{\displaystyle{
\sum_{k=0}^{\infty}\frac{b^{-2k}}{(a/b)_{k+1}k!}}}
{\displaystyle{\sum_{k=0}^{\infty}\frac{b^{-2k}}{(a/b)_{k}k!}}},
\]
where $(a/b)_0=1$ and $(a/b)_k=(a/b)(a/b+1)\dots (a/b+k-1)$ for
$k>0$.

 Lehmer also evaluated continued fractions
whose partial quotients consisted of  two interlaced arithmetic
progressions. Let $a$, $b$, $c$ and $d$ be integers satisfying
\[
2bc =d(2a+b).
\]
Then
\[
[0;a,c,a+b,c+d,a+2b,c+2d,\cdots]=
\sqrt{\frac{d}{b}}\,\frac{I_{(2a/b)}(4/\sqrt{bd})}{I_{(2a/b)-1}(4/\sqrt{bd})}.
\]
An example that Lehmer gave of the former type was the following:
\begin{equation*}
[1;2,3,4,5,\cdots]= \frac{\sum_{m=0}^{\infty}\frac{1}{(m!)^{2}}}
{\sum_{m=0}^{\infty}\frac{1}{m!(m+1)!}}
\end{equation*}

Tasoev \cite{T84}, \cite{T00} proposed a new type of continued
fraction of the form
\begin{equation}\label{taseq}
[a_{0};\underbrace{a,\cdots,a}_{m},
\underbrace{a^{2},\cdots,a^{2}}_{m},
 \underbrace{a^{3},\cdots,a^{3}}_{m}, \cdots ],
\end{equation}
where $a_{0} \geq 0$, $a \geq 2$ and $m \geq 1$ are integers. This
type  was further investigated by Komatsu  in \cite{K03a}, where he
derived a closed form for the general case ($m\geq 1$, arbitrary).
Komatsu gave several variations of Tasoevian continued fractions in
\cite{K03b}, \cite{K04}, \cite{K05} and \cite{K06}. In \cite{MLW05},
the present author and Nancy Wyshinski derived several variations of
Tasoev's continued fraction from known results about $q$-continued
fractions. Two examples of our results from that paper are the
following.
\begin{example}
Define
\[
F(c,d,q):=\sum_{n=0}^{\infty}\frac{(-1)^{n}c^{n}q^{n(n+1)/2}}{(q;q)_{n}(cq/d;q)_{n}}
\]
and let $\omega = e^{2\pi i/3}$. If $a>1$ is an integer and $c$ is a
rational such that $a/c$ is an integer, $a/c>2$, then
\begin{equation*}
\left [0;\frac{a}{c}-2,\overline{1,\frac{a^{k+1}}{c}-3}\right
]_{k=1}^{\infty}= \frac{c/a\,F(-c \omega /a, \omega^{2},1/a)} {(1+c
\omega^{2}/a)F(-c \omega , \omega^{2},1/a)}.
\end{equation*}
\end{example}
\begin{example}
For $r$, $s$ and $q \in \mathbb{C}$ with $|q|<1$, define
\[
\phi(r,s,q)= \sum_{n=0}^{\infty}\frac{q^{(n^{2}+n)/2}r^{n}}
{(q;q)_{n}(-sq;q)_{n}}.
\]
Let $m$ and $n$ be positive integers and let $d$ be rational such
that $d n \in \mathbb{Z^{+}}$ and $d m n>1$. If $n>2$ and $m>1$ then
\begin{multline*}
[0,\overline{1,d^{2k-2}n^{2k-1}-2,1,m^{2k-1}-1,
d^{2k-1}n^{2k},m^{2k}-1
}]_{k=1}^{\infty}\\
=1+ \frac{\phi(d m,d,-1/(d m n))} {\phi(-1/n,d,-1/(d m n))}.
\end{multline*}
\end{example}

In the present paper we continue our work with $q$-continued
fractions, giving $q$-continued fraction proofs for some existing
families of Tasoevian and Hurwitzian continued fractions. In
addition,  we also find the limits of some new families of Tasoevian
and Hurwitzian continued fractions.

We also evaluate various finite continued fractions containing
arithmetic progressions, deriving Lehmer's results in the limit.

\section{Tasoevian Continued Fractions}

In \cite{BMLW06}, the following result on $q$-continued fractions
was proved.
\begin{theorem}\label{1/qth}
Let $a$, $b$, $c$, $d$ be complex numbers with $d \not = 0$ and
$|q|<1$. Define \[ H_{1}(a,b,c,d,q):= \frac{1}{1} \+
\frac{-abq+c}{(a+b)q+d}
 \+ \cds \+ \frac{-ab
q^{2n+1}+cq^n}{(a+b) q^{n+1}+d}\+ \cds.
\]
Then
\begin{equation}\label{H1lim}
\frac{1}{H_{1}(a,b,c,d,q)}-1= \frac{c-abq}{(d+aq)q} \frac{\sum_{j
=0}^{\infty}\displaystyle{\frac{(b/d)^{j}(-c/bd)_{j}\,q^{(j+1)(j+2)/2}}{(q)_{j}(-aq^2/d)_{j}}
}} {\sum_{j
=0}^{\infty}\displaystyle{\frac{(b/d)^{j}(-c/bd)_{j}\,q^{j(j+1)/2}}{(q)_{j}(-aq/d)_{j}}}
}.
\end{equation}
\end{theorem}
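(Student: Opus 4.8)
The plan is to realise $1/H_{1}-1$ as the ratio of two consecutive members of a family of basic hypergeometric series that solves a three–term recurrence, and then to read off its value from the standard correspondence between such recurrences and continued fractions. First I would strip off the leading~$1$. With $a_{n}:=(a+b)q^{n+1}+d$ and $b_{n}:=q^{n}(c-abq^{n+1})$, the quantity $1/H_{1}-1$ is the continued fraction whose $n$th tail $K_{n}$ satisfies $K_{n}=b_{n}/(a_{n}+K_{n+1})$. The observation driving the proof is that if a nonzero sequence $\{g_{n}\}_{n\ge0}$ satisfies
\[
g_{n}=a_{n}\,g_{n+1}+b_{n+1}\,g_{n+2},\qquad n\ge0,
\]
then $w_{n}:=b_{n}g_{n+1}/g_{n}$ solves the same functional equation $w_{n}=b_{n}/(a_{n}+w_{n+1})$ as the tails; provided the continued fraction converges and $\{g_{n}\}$ is the recessive (minimal) solution, one gets $1/H_{1}-1=w_{0}=b_{0}\,g_{1}/g_{0}$. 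Everything then reduces to exhibiting this minimal solution explicitly and computing $g_{0}$ and $g_{1}$.

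Next I would guess $g_{n}$ in closed form. The correct choice turns out to be
\[
g_{n}:=d^{-n}\sum_{j=0}^{\infty}\frac{(b/d)^{j}(-c/bd)_{j}\,q^{j(j+1)/2+nj}}{(q)_{j}\,(-aq/d)_{n+j}},
\]
obtained from the denominator series of \eqref{H1lim} by the shift $a\mapsto aq^{n}$, the insertion of the extra weight $q^{nj}$, and the normalisation $d^{-n}/(-aq/d)_{n}$ (up to this normalisation each summand is a ${}_1\phi_1$). Using $(-aq/d)_{n+j}=(-aq/d)_{n}(-aq^{n+1}/d)_{j}$ one sees immediately that $g_{0}$ is exactly the denominator series, and a one-line reduction via $(-x)_{k+1}=(1+x)(-xq)_{k}$ gives
\[
g_{1}=\frac{1}{(d+aq)q}\sum_{j=0}^{\infty}\frac{(b/d)^{j}(-c/bd)_{j}\,q^{(j+1)(j+2)/2}}{(q)_{j}\,(-aq^{2}/d)_{j}},
\]
so $g_{1}$ is precisely $1/\bigl((d+aq)q\bigr)$ times the numerator series. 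Consequently $b_{0}g_{1}/g_{0}$ already carries the prefactor $(c-abq)/\bigl((d+aq)q\bigr)$ demanded by \eqref{H1lim}, and the theorem follows the instant the recurrence is verified.

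The substantive step — and the one I expect to be the main obstacle — is verifying that this $g_{n}$ satisfies $g_{n}=a_{n}g_{n+1}+b_{n+1}g_{n+2}$. I would clear denominators, writing the three series over the common factors $(q)_{j}$ and $(-aq/d)_{n+j+2}$, and re-index so that the generic summand of each sum carries the same power $q^{j(j+1)/2}$; the shifts $a\mapsto aq$, $a\mapsto aq^{2}$ built into $g_{n+1},g_{n+2}$ produce $(-aq^{n+2}/d)$- and $(-aq^{n+3}/d)$-factors that must be reconciled with $(-aq^{n+1}/d)$ through the same Pochhammer relations. After this alignment the claim collapses, for each fixed $j$, to one polynomial identity in $q^{n}$, $q^{j}$, $a$, $b$, $c$, $d$, which I would settle by checking that the coefficients of the resulting telescoping sum cancel. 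This is the computational heart: the algebra is routine but unforgiving, since the $q^{2n+1}$ and $q^{n}$ structure of $b_{n}$ has to match the $q^{nj}$ weights exactly.

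Finally, for rigour I would address the convergence and minimality that the formal tail manipulation takes for granted. Because $|q|<1$, the continued fraction is limit–periodic with $a_{n}\to d\neq0$ and $b_{n}\to0$ geometrically, whence $\sum_{n}|b_{n+1}|/|a_{n}a_{n+1}|<\infty$ and the continued fraction converges by a standard criterion; one then checks that the series solution $g_{n}$ is the recessive one, so that $w_{0}=b_{0}g_{1}/g_{0}$ is genuinely the limit and not merely a formal fixed point. Combining this with the identifications of $g_{0}$ and $g_{1}$ above yields \eqref{H1lim} exactly.
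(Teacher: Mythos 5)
Note first that this paper contains no proof of Theorem \ref{1/qth} to compare against: the result is imported wholesale from \cite{BMLW06} (``In \cite{BMLW06}, the following result on $q$-continued fractions was proved''). Judged on its own terms, your plan is correct and is the classical minimal-solution/Pincherle route for such evaluations, and its structural skeleton checks out completely. Writing $G(a,b,c,d,q)$ for the denominator series in \eqref{H1lim}, your guessed solution is
\[
g_n=\frac{d^{-n}}{(-aq/d)_n}\,G(aq^n,bq^n,cq^n,d,q),
\]
and your identifications are right: $g_0$ is the denominator series; $g_1=\frac{1}{(d+aq)q}\times(\text{numerator series})$, via $(-aq/d)_{j+1}=(1+aq/d)(-aq^2/d)_j$ and $j(j+1)/2+j=(j+1)(j+2)/2-1$; and $b_0=c-abq$, so $b_0g_1/g_0$ is exactly the right-hand side of \eqref{H1lim}. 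Moreover, since $(a,b,c)\mapsto(aq,bq,cq)$ sends $a_m\mapsto a_{m+1}$ and $b_m\mapsto b_{m+1}$, your recurrence $g_n=a_ng_{n+1}+b_{n+1}g_{n+2}$ is equivalent to the single contiguous relation
\[
(d+aq)\,G(a,b,c,d,q)=\bigl(d+(a+b)q\bigr)\,G(aq,bq,cq,d,q)+\frac{q(c-abq^2)}{d+aq^2}\,G(aq^2,bq^2,cq^2,d,q)
\]
specialized at $(aq^n,bq^n,cq^n)$; this relation is true, so no step of your plan can fail.

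Two cautions on execution. First, the verification of that relation is the entire mathematical content, and your description of it is slightly optimistic: after putting all three series over the common Pochhammer denominators $(-aq^2w/d;q)_j$ (with $w=q^n$), the combination does \emph{not} vanish for each fixed $j$ under the naive grouping — for instance the $j=0$ terms leave the residue $wq(c+bdq)/\bigl(d(1-q)\bigr)$ — so there is no ``one polynomial identity per $j$''; one must use $T_{j+1}=T_j\,\frac{(b/d)(1+\frac{c}{bd}q^j)q^{j+1}}{1-q^{j+1}}$ and let adjacent values of $j$ telescope, as your phrase ``telescoping sum'' already hints. Second, your convergence and minimality appeals can be made precise cheaply and should be: $b_n\to0$ geometrically and $a_n\to d\neq0$, so after an equivalence transformation Worpitzky's theorem (which this paper itself invokes for exactly this purpose) gives convergence of a tail and hence of the whole fraction; and any solution of the recurrence not proportional to $g$ has $|y_{n+1}/y_n|\to\infty$ because $b_{n+1}\to0$, whereas $g_{n+1}/g_n\to1/d$, so $g$ is indeed the minimal solution and Pincherle yields $w_0=b_0g_1/g_0$ as claimed.
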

Here we are employing the standard notation for $q$-products:
\begin{align*}
&(z)_{0}:=(z;q)_{0}:=1,& &
(z)_{n}:=(z;q)_{n}:=\prod_{k=0}^{n-1}(1-z\,q^{k}),& & \text{ if } n
\geq 1.&
\end{align*}

This theorem immediately leads to some general results concerning
Tasoevian continued fractions.

\begin{theorem}\label{T2}
Let $c$, $e$ and $m$ be  integers, $m>1$ and let $d$ be a rational
such that $d m^2, d c m/e \in \mathbb{N}$,  and $c+ d c m/e, e+ d
m^2>0$. Let
\begin{equation*}
a= \frac{e-\sqrt{e^2+4 e/c}}{2},\hspace{25pt}  b=
\frac{e+\sqrt{e^2+4 e/c}}{2}.
\end{equation*}
Then
\begin{multline}\label{tas2}
\left [0; \overline{c + \frac{d c}{e} m^{2 n-1}, e + d m^{2
n}}\,\,\right ]_{n=1}^{\infty}\\= \frac{e/c}{m d+a}
\frac{\sum_{n=0}^{\infty}\displaystyle{ \frac{(b/d)^n
m^{-n(n+3)/2}}{(1/m;1/m)_{n}(-a/d m^2;1/m)_{n}} }}
{\sum_{n=0}^{\infty}\displaystyle{ \frac{(b/d)^n
m^{-n(n+1)/2}}{(1/m;1/m)_{n}(-a/d m;1/m)_{n}}}  }.
\end{multline}
\end{theorem}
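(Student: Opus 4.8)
The plan is to obtain \eqref{tas2} as a direct specialization of Theorem~\ref{1/qth}. First I would record the two elementary relations satisfied by the quantities $a$ and $b$ of the statement, namely $a+b=e$ and $ab=-e/c$; these are just Vieta's formulas, since $a,b$ are the roots of $x^{2}-ex-e/c=0$. Then, in Theorem~\ref{1/qth}, I would set $q=1/m$, keep the parameters $a,b,d$ equal to those in the present statement, and specialize the remaining parameter of that theorem (the one written $c$ there, which I will call $c_{0}$ to avoid collision with the integer $c$ here) to $c_{0}=0$. Under this choice the factor $(-c_{0}/bd)_{j}$ on the right of \eqref{H1lim} collapses to $1$ and the prefactor becomes $(c_{0}-abq)/((d+aq)q)=-ab/(d+aq)$. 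A comparison of $q$-powers, using $q^{(j+1)(j+2)/2}=q\,q^{j(j+3)/2}$ with $q=1/m$, then matches the two series termwise up to one global factor $q=1/m$ pulled out of the numerator sum; since $-ab/(d+aq)=-abm/(dm+a)=(e/c)m/(dm+a)$ by $-ab=e/c$, that stray $q=1/m$ is exactly what reduces the prefactor to the asserted $(e/c)/(md+a)$. Thus the right-hand side of \eqref{H1lim} becomes the right-hand side of \eqref{tas2}.

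The substantive step is to identify the left-hand side. With $c_{0}=0$ the continued fraction of Theorem~\ref{1/qth} reduces to
\[
\frac{1}{H_{1}(a,b,0,d,q)}-1 \;=\; \K{n=1}{\infty}\frac{-ab\,q^{2n-1}}{(a+b)q^{n}+d},
\]
and I would convert this to a regular continued fraction by an equivalence transformation, replacing the $n$-th pair $(\alpha_{n},\beta_{n})$ by $(r_{n-1}r_{n}\alpha_{n},\,r_{n}\beta_{n})$ with $r_{0}=1$ and $r_{n}=1/(r_{n-1}\alpha_{n})$, so that every transformed numerator is $1$. Solving this recurrence (the ratio $r_{n}/r_{n-2}=q^{-2}$ splits it into even and odd indices) gives $r_{2k}=q^{-2k}$ and $r_{2k+1}=q^{-2k-1}/(-ab)$, whence the transformed denominators are
\[
r_{2k}\beta_{2k}=(a+b)+d\,q^{-2k}=e+d\,m^{2k},\qquad r_{2k+1}\beta_{2k+1}=\frac{a+b}{-ab}+\frac{d}{-ab}q^{-2k-1}=c+\frac{dc}{e}\,m^{2k+1},
\]
using $a+b=e$ and $ab=-e/c$. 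These are precisely the interlaced partial quotients appearing on the left of \eqref{tas2}, so the equivalence transformation identifies $\tfrac{1}{H_{1}}-1$ with the stated Tasoevian continued fraction.

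Finally I would verify that the hypotheses make this a genuine regular continued fraction: the conditions $dm^{2}\in\mathbb{N}$ and $dcm/e\in\mathbb{N}$ force all partial quotients to be integers, and since $dm^{2}>0$ and $dcm/e>0$ the quotients increase with $n$, so positivity for every $n$ follows from the two base cases $e+dm^{2}>0$ and $c+dcm/e>0$; convergence of both sides is then standard. The hard part will be purely the bookkeeping: getting the equivalence factors $r_{n}$ and the even/odd splitting exactly right, and tracking the lone factor of $q$ separating the two sides so that the prefactors agree. Nothing here is deep, but it is the place where an exponent or sign slip is easiest to make.
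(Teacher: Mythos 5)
Your proof is correct and takes essentially the same route as the paper: both specialize Theorem~\ref{1/qth} at $q=1/m$, $c=0$, use Vieta's formulas $a+b=e$, $ab=-e/c$, and pass between the regular continued fraction and the form of $H_{1}$ by an equivalence transformation (you run the transformation from $H_{1}$ to the regular fraction, while the paper runs it in the opposite direction, which is the same computation). Your explicit tracking of the factor $r_{n}$ and of the stray power of $q$ in the prefactor is exactly the bookkeeping the paper leaves implicit.
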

\begin{proof}
With the stated values of $a$ and $b$,
\begin{align*}
[0; &\overline{c + \frac{d c}{e} m^{2 n-1}, e + d m^{2 n}}]_{n=1}^{\infty}\\
&=\frac{1}{-((a+b)+d m)/(ab)} \+\frac{1}{(a+b)+d m^2}
 \+ \\
 &\phantom{asdsasaad}\cds \+ \frac{1
}{-((a+b)+d m^{2n-1})/(ab)}\+\frac{1}{(a+b)+d m^{2n}}\+ \cds\\
&=\frac{-ab}{(a+b)+d m} \+\frac{-ab}{(a+b)+d m^2}
 \+ \cds \+ \frac{-ab
}{(a+b) +d m^n}\+ \cds\\
&=\frac{-ab/m}{(a+b)/m+d } \+\frac{-ab/m^3}{(a+b)/m^2+d}
 \+ \cds \+ \frac{-ab/m^{2n-1}
}{(a+b)/m^{n} +d}\+ \cds.
\end{align*}
The result now follows from \eqref{H1lim}, upon setting $q=1/m$ and
$c=0$.
\end{proof}

\begin{corollary}\label{T1}
Let $c$ and $m$ be  integers, $m>1$ and let $d$ be a positive
rational such that $d m \in \mathbb{N}$ and $c+ d m>0$. Let
\begin{equation*}
a= \frac{c-\sqrt{c^2+4}}{2},\hspace{25pt} b=
\frac{c+\sqrt{c^2+4}}{2}.
\end{equation*}
Then
\begin{equation}\label{tas1}
[0; \overline{c + d m^{n}}]_{n=1}^{\infty}= \frac{1}{m d+a}
\frac{\sum_{n=0}^{\infty}\displaystyle{ \frac{(b/d)^n
m^{-n(n+3)/2}}{(1/m;1/m)_{n}(-a/d m^2;1/m)_{n}} }}
{\sum_{n=0}^{\infty}\displaystyle{ \frac{(b/d)^n
m^{-n(n+1)/2}}{(1/m;1/m)_{n}(-a/d m;1/m)_{n}}}  }.
\end{equation}
\end{corollary}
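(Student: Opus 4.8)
The plan is to obtain Corollary~\ref{T1} as the special case $e=c$ of Theorem~\ref{T2}, since under this substitution the quasi-period of Theorem~\ref{T2} collapses precisely onto the continued fraction of the corollary. First I would set $e=c$ in the quasi-period $\left(c+\tfrac{dc}{e}m^{2n-1},\,e+dm^{2n}\right)$: the coefficient $\tfrac{dc}{e}$ becomes $d$ and the constant $e$ becomes $c$, so the two partial quotients of the $n$-th block become $c+dm^{2n-1}$ and $c+dm^{2n}$. Concatenating these blocks over $n=1,2,3,\dots$ produces the single sequence $c+dm^{k}$, $k=1,2,3,\dots$, that is,
\[
\left[0;\overline{c+d m^{2n-1},\,c+d m^{2n}}\right]_{n=1}^{\infty}
=\left[0;\overline{c+d m^{n}}\right]_{n=1}^{\infty},
\]
which is exactly the left-hand side of \eqref{tas1}.

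Next I would check that the right-hand side of \eqref{tas2} specializes to that of \eqref{tas1}. With $e=c$ one has $e^{2}+4e/c=c^{2}+4$, so $a=(e-\sqrt{e^{2}+4e/c})/2$ and $b=(e+\sqrt{e^{2}+4e/c})/2$ become exactly the $a$ and $b$ of the corollary; equivalently, the key relations $a+b=e=c$ and $ab=-e/c=-1$ hold. The prefactor $\dfrac{e/c}{md+a}$ reduces to $\dfrac{1}{md+a}$ since $e/c=1$, while the two $q$-series in the numerator and denominator of \eqref{tas2} are already expressed in terms of $a$, $b$, $d$, $m$ alone and so carry over verbatim. Hence the closed form of Theorem~\ref{T2} becomes that of the corollary.

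The only point requiring care is that the hypotheses of Theorem~\ref{T2} are met under the (slightly stronger) hypotheses of the corollary together with $e=c$. The corollary assumes $d>0$, $dm\in\mathbb{N}$, and $c+dm>0$. With $e=c$ the condition $dcm/e\in\mathbb{N}$ becomes $dm\in\mathbb{N}$, and $dm^{2}\in\mathbb{N}$ follows since $m\in\mathbb{Z}$; the positivity requirements $c+dcm/e>0$ and $e+dm^{2}>0$ become $c+dm>0$ (assumed) and $c+dm^{2}>0$, the latter following from $d>0$ and $m>1$, which give $dm^{2}>dm$. I expect no genuine obstacle here: the substitution is immediate and essentially all the content lies in this integrality and positivity bookkeeping, which is precisely why the corollary must require $d>0$ whereas Theorem~\ref{T2} permits an arbitrary rational $d$. (Alternatively, one could reprove the corollary directly from Theorem~\ref{1/qth} by mimicking the proof of Theorem~\ref{T2}, now using $a+b=c$ and $ab=-1$ to rewrite each $c+dm^{n}$ as $-((a+b)+dm^{n})/(ab)$ and then specializing \eqref{H1lim}; but the reduction to the $e=c$ case above is cleaner.)
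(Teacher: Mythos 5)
Your proof is correct and is exactly the paper's own argument: the paper proves Corollary~\ref{T1} by setting $e=c$ in Theorem~\ref{T2}, precisely the specialization you carry out (your additional checks of the collapsed quasi-period, the reduction of $a$, $b$ and the prefactor, and the integrality/positivity bookkeeping are all sound, just made explicit where the paper leaves them implicit).
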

\begin{proof}
Let $e=c$ in Theorem \ref{tas2}.
\end{proof}

Remarks: 1) We believe that the limit of the general Tasoevian
continued fraction of the form $[0; \overline{c + d
m^{n}}]_{n=1}^{\infty}$ has not been evaluated before, although
special cases have occurred in the literature, such as $[0;
\overline{d m^{n}}]_{n=1}^{\infty}$ by Komatsu in \cite{K03b}. We
believe that the evaluation of the general Tasoevian continued
fraction $[0; \overline{c + \frac{d c}{e} m^{2 n-1}, e + d m^{2
n}}]_{n=1}^{\infty}$  is also new.

2) It is clear from Theorem \ref{1/qth} that \eqref{tas1} also holds
for many cases where the partial quotients in \eqref{tas2} or
\eqref{tas1}  are not positive integers. In particular, we can let
the parameters assume negative values and then convert the resulting
continued fractions to regular continued fractions by removing any
resulting zero- and negative partial quotients. This will produce
still further general classes of Tasoevian continued fractions.

To accomplish this, we recall, as noted  in \cite{VDP94}, that
 $[m,n,0,p,
\alpha]$ $=$ $[m,n+p, \alpha]$ and
$[m,-n,\alpha]=[m-1,1,n-1,-\alpha]$. We give two examples to
illustrate the phenomenon, using the continued fraction at
\eqref{tas2}

\begin{corollary}\label{cneg}
Let $c$, $e$ and $m$ be  integers, $m>1$ and let $d$ be a positive
rational such that $dm^2, dc m/e \in \mathbb{N}$. Let
\begin{equation*}
a= \frac{e-\sqrt{e^2+4e/c}}{2},\hspace{25pt} b=
\frac{e+\sqrt{e^2+4e/c}}{2}.
\end{equation*}
(i) Suppose that  $dc m/e-c-2>0$ and $d m^{2}+e-2>0$. Then
\begin{multline}\label{tas1-1}
\left [0; \overline{1, \frac{d c}{e} m^{2n-1}-c-2,1, d
m^{2n}+e-2}\,\,\right]_{n=1}^{\infty}\\= 1+\frac{e/c}{-m d+a}
\frac{\sum_{n=0}^{\infty}\displaystyle{ \frac{(b/d)^n
(-m)^{-n(n+3)/2}}{(-1/m;-1/m)_{n}(-a/d m^2;-1/m)_{n}} }}
{\sum_{n=0}^{\infty}\displaystyle{ \frac{(b/d)^n
(-m)^{-n(n+1)/2}}{(-1/m;-1/m)_{n}(a/d m;-1/m)_{n}}}  }.
\end{multline}
(ii) Suppose that  $dc m/e+c-1>0$ and $d m^{2}-e-2>0$. Then
\begin{multline}\label{tas1-2}
\left [0; \frac{d c}{e} m+c-1,\overline{1, d m^{2n}-e-2,1, \frac{d
c}{e} m^{2n+1}+c-2}\,\,\right]_{n=1}^{\infty}\\= \frac{e/c}{m d+a}
\frac{\sum_{n=0}^{\infty}\displaystyle{ \frac{(-b/d)^n
(-m)^{-n(n+3)/2}}{(-1/m;-1/m)_{n}(a/d m^2;-1/m)_{n}} }}
{\sum_{n=0}^{\infty}\displaystyle{ \frac{(-b/d)^n
(-m)^{-n(n+1)/2}}{(-1/m;-1/m)_{n}(-a/d m;-1/m)_{n}}}  }.
\end{multline}
\end{corollary}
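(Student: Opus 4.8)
The plan is to run the derivation of Theorem~\ref{T2} a second time, but feeding $q=-1/m$ into Theorem~\ref{1/qth} in place of $q=1/m$; the reduction rules of \cite{VDP94} recalled just before the statement then convert the resulting non-regular continued fractions into the regular ones in \eqref{tas1-1} and \eqref{tas1-2}. Since $m>1$ forces $|-1/m|<1$ and $d\neq0$, Theorem~\ref{1/qth} applies verbatim, and I again take the parameter $c$ of that theorem to be $0$, so that $a+b=e$ and $ab=-e/c$ exactly as in Theorem~\ref{T2}.

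First I would repeat the equivalence-transformation computation from the proof of Theorem~\ref{T2}, carrying $q=-1/m$ throughout. The partial quotients there are built from $d\,q^{-k}=d(-m)^{k}$, so the two parities separate: even powers retain $dm^{2n}$ while odd powers pick up a sign, giving $-\frac{dc}{e}m^{2n-1}$. For part~(i) I keep the parameters $a,b$ unchanged; extracting the factor $q$ from $q^{(j+1)(j+2)/2}$ in \eqref{H1lim} then produces precisely the two series and the prefactor $\frac{e/c}{-md+a}$ of \eqref{tas1-1}, attached to the non-regular expansion $[0;\overline{\,c-\frac{dc}{e}m^{2n-1},\,e+dm^{2n}}\,]_{n=1}^{\infty}$. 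For part~(ii) I instead feed the parameters $-a,-b$ into Theorem~\ref{1/qth}: this leaves $ab$ (hence the shape of the $q$-series) intact but replaces $a+b=e$ by $-e$, yielding the expansion $[0;\overline{-(c+\frac{dc}{e}m^{2n-1}),\,dm^{2n}-e}\,]_{n=1}^{\infty}$ together with the series of \eqref{tas1-2} and the prefactor $-\frac{e/c}{md+a}$.

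Next I would regularize. In case~(i) the leading partial quotient $c-\frac{dc}{e}m$ is negative, so I use the identity $1+[0;\overline{-A_n,B_n}]=[0;\overline{1,A_n-2,1,B_n-2}]$ with $A_n=\frac{dc}{e}m^{2n-1}-c$ and $B_n=dm^{2n}+e$; this manufactures the $1+$ on the right of \eqref{tas1-1}, and the hypotheses $\frac{dc}{e}m-c-2>0$, $dm^{2}+e-2>0$ together with $dm^{2},\,dcm/e\in\mathbb{N}$ guarantee that every $A_n-2$ and $B_n-2$ is a positive integer. In case~(ii) I first negate the whole expansion via $[0;s_1,s_2,\dots]=-[0;-s_1,-s_2,\dots]$, which makes the leading quotient $c+\frac{dc}{e}m$ positive and the even quotients $e-dm^{2n}$ negative, and which flips the prefactor to the $+\frac{e/c}{md+a}$ of \eqref{tas1-2}. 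Now only interior quotients are negative, and repeated use of $[\dots,x,-y,\beta]=[\dots,x-1,1,y-1,-\beta]$ (together with $[\dots,x,0,y,\dots]=[\dots,x+y,\dots]$ whenever a $0$ is produced) should unfold into the pre-period $\frac{dc}{e}m+c-1$ followed by the quasi-period $1,\,dm^{2n}-e-2,\,1,\,\frac{dc}{e}m^{2n+1}+c-2$.

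I expect the main obstacle to be this last step: checking that the reduction rules genuinely telescope into the \emph{stated} regular continued fractions. The delicate feature is the sign $-\beta$ in $[\dots,x,-y,\beta]=[\dots,x-1,1,y-1,-\beta]$, which must be tracked as it propagates down the infinite tail, since each application flips the sign of everything that remains and so feeds the \emph{next} negative quotient into another application. The cleanest way to settle it, rather than iterating the rule indefinitely, is a direct recursive verification: writing the tails $u,v$ of each block and confirming that the sub-expressions $1-u$ and $1+v$ reproduce the same block structure one step down, so that by induction on blocks $1+[0;\overline{-A_n,B_n}]$ and $-[0;\overline{-(c+\frac{dc}{e}m^{2n-1}),\,dm^{2n}-e}]$ collapse exactly to the right-hand sides of \eqref{tas1-1} and \eqref{tas1-2}. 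Once $q=-1/m$ and the two parameter choices $(a,b)$ and $(-a,-b)$ are fixed, the prefactor and $q$-series manipulations are routine, so the whole weight of the proof rests on this bookkeeping.
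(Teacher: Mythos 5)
Your proposal is correct and takes essentially the same route as the paper: the paper proves (i) by replacing $m$ by $-m$ in \eqref{tas2} (i.e.\ running the proof of Theorem \ref{T2} with $q=-1/m$ in Theorem \ref{1/qth}, exactly your step) and proves (ii) by replacing $d$ by $-d$ and $m$ by $-m$, which yields the same identity as your $(-a,-b)$-plus-global-negation maneuver, and in both cases it then removes the negative partial quotients with the same rules from \cite{VDP94}. The telescoping you describe --- producing $[0;\overline{1,A_n-2,1,B_n-2}]$ in case (i) and the pre-period $\frac{dc}{e}m+c-1$ followed by $1,\,dm^{2n}-e-2,\,1,\,\frac{dc}{e}m^{2n+1}+c-2$ in case (ii) --- is precisely what the paper's terse phrase ``removing the negative partial quotients as described above'' carries out.
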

\begin{proof}
The identity at \eqref{tas1-1} follows from \eqref{tas2} upon
replacing $m$ by $-m$, removing the negative partial quotients from
the continued fraction as described above, and finally moving the
initial $-1$ to the right side. The identity at \eqref{tas1-1}
follows similarly, upon replacing $d$ by $-d$ and $m$ by $-m$.
\end{proof}

Before coming to the next result, we need some more terminology. We
call $d_{0}+K_{n=1}^{\infty}c_{n}/d_{n}$  a \emph{canonical
contraction} of
 $b_{0}+K_{n=1}^{\infty}a_{n}/b_{n}$ if
\begin{align}\label{can1}
&C_{k}=A_{n_{k}},& &D_{k}=B_{n_{k}}& &\text{ for } k=0,1,2,3,\ldots
\, ,\phantom{asdasd}&
\end{align}
where $C_{n}$, $D_{n}$, $A_{n}$ and $B_{n}$ are canonical numerators
and denominators of $d_{0}+K_{n=1}^{\infty}c_{n}/d_{n}$ and
$b_{0}+K_{n=1}^{\infty}a_{n}/b_{n}$ respectively. From \cite{LW92}
(page 83) we have the following theorem:
\begin{theorem}\label{T:t1}
The canonical contraction of $b_{0}+K_{n=1}^{\infty}a_{n}/b_{n}$
with
\begin{align*}
&C_{k}=A_{2k}& &D_{k}=B_{2k}& &\text{ for } k=0,1,2,3,\ldots \, ,&
\end{align*}
exists if and only if $b_{2k} \not = 0$ for $k=1,2,3,\ldots$, and in
this case is given by
\begin{equation}\label{E:evcf}
b_{0} + \frac{b_{2}a_{1}}{b_{2}b_{1}+a_{2}} \-
\frac{a_{2}a_{3}b_{4}/b_{2}}{a_{4}+b_{3}b_{4}+a_{3}b_{4}/b_{2}} \-
\frac{a_{4}a_{5}b_{6}/b_{4}}{a_{6}+b_{5}b_{6}+a_{5}b_{6}/b_{4}} \+
\cds .
\end{equation}
\end{theorem}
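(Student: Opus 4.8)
The plan is to argue directly from the three-term recurrences satisfied by the canonical numerators and denominators, reducing the whole statement to a $2\times2$ linear system whose determinant encodes the criterion. Write $A_{-1}=1$, $A_{0}=b_{0}$, $B_{-1}=0$, $B_{0}=1$ and, for $n\ge 1$, $A_{n}=b_{n}A_{n-1}+a_{n}A_{n-2}$ and $B_{n}=b_{n}B_{n-1}+a_{n}B_{n-2}$; I will also use the determinant identity $A_{n}B_{n-1}-A_{n-1}B_{n}=(-1)^{n-1}a_{1}a_{2}\cdots a_{n}$, which follows by a one-line induction. By the definition at \eqref{can1}, the even contraction exists exactly when there are numbers $c_{k},d_{k}$ (and $d_{0}$) for which $C_{k}:=A_{2k}$ and $D_{k}:=B_{2k}$ obey the contracted recurrences $C_{k}=d_{k}C_{k-1}+c_{k}C_{k-2}$ and $D_{k}=d_{k}D_{k-1}+c_{k}D_{k-2}$ with the correct initial data. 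So the entire problem is to solve these relations for the $c_{k}$ and $d_{k}$.

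First I would dispose of the initial terms: taking $d_{0}=b_{0}=A_{0}$ gives $D_{0}=1=B_{0}$, and matching $A_{2}=d_{1}d_{0}+c_{1}$ against $B_{2}=d_{1}$ forces $d_{1}=b_{2}b_{1}+a_{2}$ and $c_{1}=b_{2}a_{1}$, which is the first partial quotient of \eqref{E:evcf} and requires no division. For $k\ge 2$ the pair $A_{2k}=d_{k}A_{2k-2}+c_{k}A_{2k-4}$ and $B_{2k}=d_{k}B_{2k-2}+c_{k}B_{2k-4}$ is a linear system in $(d_{k},c_{k})$ whose coefficient matrix has columns $(A_{2k-2},B_{2k-2})$ and $(A_{2k-4},B_{2k-4})$. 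The key step is to compute its determinant: expanding $A_{2k-2}$ and $B_{2k-2}$ once via the recurrence makes the $a_{2k-2}$-terms cancel, leaving $b_{2k-2}\bigl(A_{2k-3}B_{2k-4}-A_{2k-4}B_{2k-3}\bigr)=b_{2k-2}\,a_{1}a_{2}\cdots a_{2k-3}$ by the determinant identity. Since the $a_{j}$ are nonzero for a genuine continued fraction, this determinant vanishes precisely when $b_{2k-2}=0$; letting $k$ run over $2,3,4,\dots$ gives the criterion $b_{2k}\ne 0$ for all $k\ge 1$, and solving by Cramer's rule under this hypothesis yields $d_{k}=a_{2k}+b_{2k-1}b_{2k}+a_{2k-1}b_{2k}/b_{2k-2}$ and $c_{k}=-a_{2k-2}a_{2k-1}b_{2k}/b_{2k-2}$, which are exactly the entries displayed in \eqref{E:evcf}. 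Necessity of the criterion is already visible here, since each contracted partial quotient is a rational expression carrying $b_{2k-2}$ in a denominator.

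I expect the genuine obstacle to be the \emph{nonexistence} half, namely confirming that no continued fraction can serve as the contraction when some $b_{2k-2}=0$, rather than merely that the formula breaks down. When $b_{2k-2}=0$ the two columns above become proportional, so a solution can exist only if the target $(A_{2k},B_{2k})$ is proportional to them as well. To rule this out I would expand $A_{2k},B_{2k}$ in terms of the indices $2k-2$ and $2k-3$, substitute $A_{2k-2}=a_{2k-2}A_{2k-4}$ and $B_{2k-2}=a_{2k-2}B_{2k-4}$ (valid since $b_{2k-2}=0$), and thereby obtain $A_{2k}B_{2k-4}-A_{2k-4}B_{2k}=b_{2k}a_{2k-1}\,a_{1}\cdots a_{2k-3}$. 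When $b_{2k}\ne 0$ this is nonzero, so the target is not proportional and the system is inconsistent; the one remaining doubly degenerate case $b_{2k-2}=b_{2k}=0$ is handled by observing that it forces the failure to recur at the next index (where $b_{2k}$ now plays the role of the vanishing determinant factor), so the full contraction cannot be defined. Assembling these pieces gives both the equivalence and the explicit form \eqref{E:evcf}.
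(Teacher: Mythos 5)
The paper itself offers no proof of Theorem \ref{T:t1} --- it is quoted from Lorentzen and Waadeland (page 83) --- so your proposal must stand on its own. The first half of your argument is correct and complete: the initial data force $d_{1}=b_{2}b_{1}+a_{2}$ and $c_{1}=b_{2}a_{1}$, the determinant computation $A_{2k-2}B_{2k-4}-A_{2k-4}B_{2k-2}=b_{2k-2}\,a_{1}a_{2}\cdots a_{2k-3}$ is right, and Cramer's rule does produce exactly $d_{k}=a_{2k}+b_{2k-1}b_{2k}+a_{2k-1}b_{2k}/b_{2k-2}$ and $c_{k}=-a_{2k-2}a_{2k-1}b_{2k}/b_{2k-2}$, i.e.\ the entries of \eqref{E:evcf}. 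That direction is fine.

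The gap is in the necessity half, and it traces back to your framing of ``existence.'' You reduce existence of the contraction to solvability of the linear systems, but a canonical contraction must itself be a continued fraction, and in the source this theorem comes from a continued fraction is required to have all partial numerators nonzero --- a convention you invoke for the $a_{j}$ of the original fraction but never for the $c_{k}$ of the contraction. This omission is exactly what breaks your treatment of the doubly degenerate case. Suppose $b_{2j}=0$ for \emph{all} $j\geq K$ (for instance $a_{n}=1$ for every $n$ and $b_{2j}=0$ for every $j\geq 1$). Then for each $k>K$ one has $(A_{2k},B_{2k})=a_{2k}a_{2k-2}(A_{2k-4},B_{2k-4})$, so the target vector is automatically proportional to the (proportional) columns: every system from step $K+1$ onward is \emph{consistent}, with a one-parameter family of solutions in which $c_{k}$ may even be chosen nonzero. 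The ``failure recurring at the next index'' that you appeal to never materializes, your induction never terminates, and under your framing the contraction would exist --- contradicting the theorem, which asserts it does not. What actually kills these cases is the zero your own formulas produce: at the \emph{first} index $K$ with $b_{2K}=0$, the matching conditions are nondegenerate (since $b_{2K-2}\neq 0$, or since the step-$1$ system always has determinant $-1$ when $K=1$) and force uniquely $c_{K}=-a_{2K-2}a_{2K-1}b_{2K}/b_{2K-2}=0$ (respectively $c_{1}=a_{1}b_{2}=0$), and a continued fraction cannot have a vanishing partial numerator. Adding that single observation proves necessity in all cases at once and renders your entire inconsistency analysis, including the problematic degenerate branch, unnecessary.
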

The continued fraction \eqref{E:evcf} is called the \emph{even} part
of $b_{0}+K_{n=1}^{\infty}a_{n}/b_{n}$. If a continued fraction
converges then of course its even part converges to the same limit.

\begin{theorem}\label{T3}
Let $u$, and $v$ be  positive integers, $u, v>1$, and let $e$ and
$f$ be  rationals such that $e u, f v \in \mathbb{N}$.  Then
\begin{multline}\label{tas3}
[0; \overline{e u^{n},  f v^{ n}}]_{n=1}^{\infty}\\=
\left(\frac{1}{e u} -\frac{1}{e^2 f u^2 v  +e}\right)
\frac{\sum_{n=0}^{\infty}\displaystyle{ \frac{(e f)^{-n}
(uv)^{-n(n+3)/2}}{(1/uv;1/uv)_{n}(-1/efu^3 v^2;1/uv)_{n}} }}
{\sum_{n=0}^{\infty}\displaystyle{ \frac{(e f)^{-n}
(uv)^{-n(n+1)/2}}{(1/uv;1/uv)_{n}(-1/efu^2 v;1/uv)_{n}} }  }.
\end{multline}
\end{theorem}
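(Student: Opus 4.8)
The plan is to collapse the period-two continued fraction \eqref{tas3} onto the period-one continued fraction $H_1$ of Theorem \ref{1/qth} by means of the even-part construction of Theorem \ref{T:t1}, and then to read off the limit from \eqref{H1lim}. First I would strip off the leading partial quotient, writing
\[
[0; \overline{eu^n, fv^n}]_{n=1}^{\infty} = \frac{1}{eu + W}, \qquad W := [0; fv, eu^2, fv^2, eu^3, \ldots].
\]
The point of discarding $eu$ is that the surviving tail $W$ pairs each $fv^n$ with the \emph{following} $eu^{n+1}$; contracting in this order, rather than pairing $eu^n$ with $fv^n$, is what produces parameters compatible with the series appearing in \eqref{tas3}. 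Applying the even part \eqref{E:evcf} to $W$, with $b_{2k-1} = fv^k$, $b_{2k} = eu^{k+1}$ and every $a_n = 1$, collapses each such pair and, after the obvious cancellations, gives
\[
W = \frac{eu^2}{efu^2 v + 1} \- \frac{u}{(1+u) + efu(uv)^2} \- \frac{u}{(1+u) + efu(uv)^3} \- \cds,
\]
a continued fraction in powers of the single base $uv$.

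Next I would set $q = 1/(uv)$, $a = 1$, $b = u$, $c = 0$ and $d = efu$ in Theorem \ref{1/qth}, and apply the equivalence transformation with factors $r_j = (uv)^j$ to the tail $1/H_1 - 1$ of $H_1$. A short computation turns its $j$-th partial quotient into $-u/\bigl((1+u) + efu(uv)^j\bigr)$, so that $1/H_1 - 1$ and the contracted $W$ above have an \emph{identical} tail from the second partial quotient onward, differing only in their first terms. Writing each of $W$ and $1/H_1 - 1$ in the form $(\text{numerator})/((\text{first denominator}) + R)$ with the common tail $R$, and eliminating $R$ between the two, produces a single linear relation that collapses to $W = eu(H_1 - 1)$, whence
\[
[0; \overline{eu^n, fv^n}]_{n=1}^{\infty} = \frac{1}{eu + W} = \frac{1}{eu\, H_1}.
\]

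It then remains to insert the closed form \eqref{H1lim} for $1/H_1$. With $c = 0$ every factor $(-c/bd)_j$ equals $1$; the choices $b/d = 1/(ef)$ and $a/d = 1/(efu)$ turn the symbols $(-aq^2/d)_j$ and $(-aq/d)_j$ into $(-1/(efu^3 v^2); 1/uv)_j$ and $(-1/(efu^2 v); 1/uv)_j$; and the constant $(uv)^{-1}$ arising from the exponent identity $(j+1)(j+2)/2 = j(j+3)/2 + 1$ is absorbed into the prefactor. Expanding $1/H_1 = 1 + (1/H_1 - 1)$ and substituting \eqref{H1lim} then yields the limit as $\frac{1}{eu}$ plus a constant multiple of the displayed ratio of series, the additive $1$ accounting for the $1/(eu)$ piece of the prefactor, giving the closed form recorded in \eqref{tas3}.

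I expect the main obstacle to be the bookkeeping in the two middle steps: performing the contraction \eqref{E:evcf} and the subsequent equivalence transformation without mis-indexing, and above all choosing the pairing (after the initial peeling of $eu$) that makes the contracted tail coincide \emph{exactly} with the tail of $H_1$. Handling the lone exceptional first term correctly is the delicate point, since it is precisely what separates the two pieces of the prefactor; once the dictionary $q = 1/(uv)$, $a = 1$, $b = u$, $d = efu$ is in place, the remaining steps are routine $q$-series manipulation.
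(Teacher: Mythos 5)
Your proposal is correct and is essentially the paper's own argument: both contract onto $H_1$ via the even-part theorem (Theorem \ref{T:t1}) and then invoke Theorem \ref{1/qth} with exactly the same specialization $q=1/uv$, $c=0$, $a=1$, $b=u$, $d=efu$, arriving at the key identity $[0;\overline{eu^n, fv^n}\,]_{n=1}^{\infty}=1/(eu\,H_1)$ before reading off the series from \eqref{H1lim}. The only difference is bookkeeping at the front end: the paper prepends an arbitrary partial quotient $b_1$ to $[0;\overline{eu^n,fv^n}\,]$ so that the even part's first tail is \emph{exactly} the tail of $H_1$ (then inverts and cancels $b_1$), whereas you strip the leading $eu$, contract the remainder $W$, and eliminate the common tail to get $W=eu(H_1-1)$ --- two routes to the same identity.
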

\begin{proof}
We consider the continued fraction \begin{equation}\label{b1eq} [0;
b_1,\overline{e u^{n}, f v^{ n}}]_{n=1}^{\infty},
\end{equation}
with $b_1$ an arbitrary positive integer. Clearly this continued
fraction
 converges, and is
thus equal to its even part. By \eqref{E:evcf} this equals
\begin{align}\label{t3eq}
&\frac{e u}{e ub_1+1} \-
\frac{u}{1+(efu)(uv)+u}\-\frac{u}{1+(efu)(uv)^2+u}\- \cds
\\
&=\frac{e u}{e ub_1+1} \-
\frac{u/(uv)}{(1+u)/(uv)+efu}\-\frac{u/(uv)^3}{(1+u)/(uv)^2+efu}\-
\cds. \notag
\end{align}
We now apply Theorem \ref{1/qth} to the first tail of the
continued fraction above, setting $q=1/uv$, $c=0$, $d=efu$, $a=1$
and $b=u$. The result follows upon inverting both the expression
resulting from \eqref{t3eq} and the continued fraction at
\eqref{b1eq}, and then cancelling $b_1$.
\end{proof}

Remark: Komatsu has a result in \cite{K03b}, concerning Tasoevian
continued fractions of the form $[0;\overline{u a^{k},  v b^{
k}}]_{k=1}^{\infty}$, but he does not explicitly compute the limits,
expressing them instead as ratios of series containing  certain
functions, $R_{0,n}$ and $R_{1,n}$, which are defined recursively
for $n \geq 0$. We believe the result in Theorem \ref{T3} to be new.

In \cite{L73}, where Lehmer investigated continued fractions whose
partial quotients were in arithmetical progressions, he remarked
that it was also possible to  evaluate continued fractions in which
the terms forming the arithmetic progressions were separated by
constant strings of arbitrary partial quotients. We next show that
this can also be done with some classes of Tasoevian continued
fractions.

\begin{theorem}\label{apinter2}
Let $c$,  $e$ and $m$ be  integers, with $m>1$.

Let $a_1, a_2, \dots, a_k$ be fixed positive integers and, for $1
\leq i \leq k$, define $P_i$ and $Q_i$ by
\[
\frac{P_i}{Q_i}=\frac{1}{a_1}\+\frac{1}{a_2}\+ \cds \+
\frac{1}{a_i},
\]
and set $C=Q_{k-1}+P_k+c\,Q_k$ and $E=Q_{k-1}+P_k+e\,Q_k$. We
suppose further that $d$ is a positive rational such that $C
d\,m/E$, $dm\in \mathbb{N}$.

If $k$ is even,  set
\begin{align*}
a&= \frac{E-\sqrt{E^2+4E/C}}{2},\\
b&= \frac{E+\sqrt{E^2+4E/C}}{2}.
\end{align*}
Then
\begin{multline}\label{tas1ex21}
\left[0; \overline{a_1,\dots, a_k,c+ \frac{C}{E}dm^{2n-1},a_1,\dots,
a_k,e+d m^{2n}}\,\,\right]_{n=1}^{\infty}\\=\frac{P_k}{Q_k}+
\frac{E/C}{Q_k(m d Q_k+a)} \frac{\sum_{n=0}^{\infty}\displaystyle{
\frac{(b/dQ_k)^n m^{-n(n+3)/2}}{(1/m;1/m)_{n}(-a/dQ_k m^2;1/m)_{n}}
}} {\sum_{n=0}^{\infty}\displaystyle{ \frac{(b/dQ_k)^n
m^{-n(n+1)/2}}{(1/m;1/m)_{n}(-a/dQ_k m;1/m)_{n}}} }.
\end{multline}

If $k$ is odd,  set
\begin{align*}
a&= \frac{E-\sqrt{E^2-4E/C}}{2},\\
b&= \frac{E+\sqrt{E^2-4E/C}}{2}.
\end{align*}
Then
\begin{multline}\label{tas1ex22}
\left[0; \overline{a_1,\dots, a_k,c+ \frac{C}{E}dm^{2n-1},a_1,\dots,
a_k,e+d m^{2n}}\,\,\right]_{n=1}^{\infty}\\=\frac{P_k}{Q_k}+
\frac{-E/C}{Q_k(m d Q_k+a)} \frac{\sum_{n=0}^{\infty}\displaystyle{
\frac{(b/dQ_k)^n m^{-n(n+3)/2}}{(1/m;1/m)_{n}(-a/dQ_k m^2;1/m)_{n}}
}} {\sum_{n=0}^{\infty}\displaystyle{ \frac{(b/dQ_k)^n
m^{-n(n+1)/2}}{(1/m;1/m)_{n}(-a/dQ_k m;1/m)_{n}}} }.
\end{multline}
\end{theorem}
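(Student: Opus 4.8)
The plan is to reduce this continued fraction to the form already evaluated in Theorem \ref{T2} (equivalently, Theorem \ref{1/qth}) by \emph{absorbing} each fixed string $a_1,\dots,a_k$ into the continued fraction, so that only the ``large'' partial quotients survive, in a modified arithmetic--geometric pattern. Concretely, I would treat each block consisting of a string $a_1,\dots,a_k$ followed by a single large quotient $g$ as a M\"obius transformation of the succeeding tail $\beta$: placing $g+\beta$ in the $(k+1)$st complete-quotient slot gives the value $F_g(\beta)=(P_k(g+\beta)+P_{k-1})/(Q_k(g+\beta)+Q_{k-1})$, where $P_i/Q_i$ are the convergents in the statement. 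The whole continued fraction is then the composition $F_{A_1}\circ F_{B_1}\circ F_{A_2}\circ\cds$, and the first step is to contract this composition.

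Writing $g_1,g_2,g_3,\dots$ for the successive large quotients (so $g_{2n-1}=c+\tfrac{C}{E}dm^{2n-1}$ and $g_{2n}=e+dm^{2n}$) and using the determinant identity $P_kQ_{k-1}-P_{k-1}Q_k=(-1)^{k+1}$, a short telescoping computation should yield the exact identity
\[
\left[0;\overline{a_1,\dots,a_k,c+\tfrac{C}{E}dm^{2n-1},\dots}\,\right]=\frac{P_k}{Q_k}\+\frac{(-1)^k/Q_k}{Q_kg_1+Q_{k-1}+P_k}\+\frac{(-1)^k}{Q_kg_2+Q_{k-1}+P_k}\+\cds,
\]
a continued fraction whose partial numerators are all $(-1)^k$ (after the first) and whose partial denominators are $Q_kg_n+Q_{k-1}+P_k$; this is in effect a $(k{+}1)$-fold analogue of the even-part contraction used for Theorem \ref{T3}. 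The purpose of the definitions $C=Q_{k-1}+P_k+cQ_k$ and $E=Q_{k-1}+P_k+eQ_k$ is precisely that these denominators collapse to $C+\tfrac{C}{E}(dQ_k)m^{2n-1}$ at odd positions and to $E+(dQ_k)m^{2n}$ at even positions. Thus, apart from the leading $P_k/Q_k$ and the factor $1/Q_k$ on the first numerator, the surviving continued fraction is exactly a Tasoevian continued fraction of the type evaluated in Theorem \ref{T2}, with parameters $c\mapsto C$, $e\mapsto E$, $d\mapsto dQ_k$.

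For $k$ even the numerators are $+1$, so this is literally $[0;\overline{C+\tfrac{C}{E}(dQ_k)m^{2n-1},E+(dQ_k)m^{2n}}]$, and Theorem \ref{T2} applies verbatim with the stated $a,b$ (for which $a+b=E$ and $ab=-E/C$); multiplying by $1/Q_k$ and adding $P_k/Q_k$ then gives \eqref{tas1ex21}, the prefactor becoming $\tfrac{E/C}{Q_k(mdQ_k+a)}$. For $k$ odd the numerators are $-1$, and here I would repeat the equivalence-transformation reduction from the proof of Theorem \ref{T2}: first write $C+\tfrac{C}{E}(dQ_k)m^{2n-1}=((a+b)+(dQ_k)m^{2n-1})/(ab)$ using the odd-$k$ values $a+b=E$, $ab=+E/C$, then rescale all partial numerators to $-ab$ and divide the $n$th denominator by $m^n$, arriving at the tail of $H_1(a,b,0,dQ_k,1/m)$. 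Since Theorem \ref{1/qth} computes exactly $1/H_1-1$, which is the value of that tail, this produces \eqref{tas1ex22}, the sign $ab=+E/C$ accounting for the minus sign in its prefactor.

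I expect the main obstacle to be the contraction step together with the sign bookkeeping: establishing the telescoping identity cleanly and, above all, tracking the parity factor $(-1)^k$ coming from $P_kQ_{k-1}-P_{k-1}Q_k$, since it is exactly this parity that forces the two different definitions of $a$ and $b$ (via $\pm 4E/C$ under the radical) and the opposite signs of the prefactors in \eqref{tas1ex21} and \eqref{tas1ex22}. Once the contraction is in hand, the remaining manipulations are the same equivalence transformations already rehearsed in the proofs of Theorems \ref{T2} and \ref{T3}, so the final reduction to Theorem \ref{1/qth} should be routine.
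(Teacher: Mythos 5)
Your proposal is correct and takes essentially the same route as the paper: the paper likewise collapses each block $a_1,\dots,a_k$ via the identity $[0;a_1,\dots,a_k,\alpha]=\frac{P_k}{Q_k}+\frac{(-1)^k/Q_k^2}{Q_{k-1}/Q_k+\alpha}$ (the determinant identity supplying the $(-1)^k$), obtains exactly your contracted fraction with partial numerators $(-1)^k$ and partial denominators $C+\frac{C}{E}dQ_k m^{2n-1}$ and $E+dQ_k m^{2n}$, and then invokes Theorem \ref{T2} with $c\mapsto C$, $e\mapsto E$, $d\mapsto dQ_k$. The only cosmetic difference is the odd-$k$ case, where the paper flips the signs of the odd-indexed partial denominators by an equivalence transformation and applies Theorem \ref{T2} with $c\mapsto -C$, whereas you re-run the equivalence-transformation reduction from Theorem \ref{T2}'s proof directly down to Theorem \ref{1/qth} --- the same computation in inlined form.
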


\begin{proof}
For any $\alpha$, {\allowdisplaybreaks
\begin{align*}
[0; a_1,a_2, \dots a_k,\alpha]
&=\frac{\alpha P_k+P_{k-1}}{\alpha Q_k+Q_{k-1}}\\
&= \frac{P_k}{Q_k}+\frac{(P_{k-1}Q_k-Q_{k-1}P_k)/Q_k^2}{Q_{k-1}/Q_k+\alpha}\\
&= \frac{P_k}{Q_k}+\frac{(-1)^k/Q_k^2}{Q_{k-1}/Q_k+\alpha},
\end{align*}
}where the last equality follows from a standard identity in
continued fractions. Thus
 {\allowdisplaybreaks\begin{align*}
&\bigg[0; \overline{a_1,\dots, a_k,c+
\frac{C}{E}dm^{2n-1},a_1,\dots, a_k,e+d
m^{2n}}\,\,\bigg]_{n=1}^{\infty}\\ &= \frac{P_k}{Q_k} +
\frac{(-1)^k/Q_k^2}{\frac{P_k+Q_{k-1}}{Q_k}+c + \frac{d C m}{E}} \+
\frac{(-1)^k/Q_k^2}{\frac{P_k+Q_{k-1}}{Q_k}+e + d  m^2} \+ \cds\\& =
\frac{P_k}{Q_k} +\frac{1}{Q_k}\left( \frac{(-1)^k}{C + \frac{ C d
Q_k }{E} m} \+ \frac{(-1)^k}{E + d Q_k m^2} \+ \cds \right).
\end{align*}
}

 If $k$ is even, then
\begin{multline*}
\bigg[0; \overline{a_1,\dots, a_k,c+ \frac{C}{E}dm^{2n-1},a_1,\dots,
a_k,e+d m^{2n}}\,\,\bigg]_{n=1}^{\infty}\\ = \frac{P_k}{Q_k}
+\frac{1}{Q_k} \bigg[0;\overline{C + \frac{ C d Q_k }{E} m^{2n-1},E
+ d Q_k m^{2n}}\,\,\bigg]_{n=1}^{\infty},
\end{multline*}
and \eqref{tas1ex21} now follows from Theorem \ref{T2}.

If $k$ is odd, then
\begin{multline*}
\bigg[0; \overline{a_1,\dots, a_k,c+ \frac{C}{E}dm^{2n-1},a_1,\dots,
a_k,e+d m^{2n}}\,\,\bigg]_{n=1}^{\infty}\\ = \frac{P_k}{Q_k}
+\frac{1}{Q_k} \bigg[0;\overline{(-C) + \frac{ (-C) d Q_k }{E}
m^{2n-1},E + d Q_k m^{2n}}\,\,\bigg]_{n=1}^{\infty},
\end{multline*}
and \eqref{tas1ex22} likewise follows from Theorem \ref{T2}.
\end{proof}

\begin{corollary}\label{apinter}
Let $c$ and $m$ be  integers, $m>1$ and let $d$ be a positive
rational such that $d m \in \mathbb{N}$ and $c+ d m>0$.

Let $a_1, a_2, \dots, a_k$ be fixed positive integers and, for $1
\leq i \leq k$, define $P_i$, $Q_i$ by
\[
\frac{P_i}{Q_i}=\frac{1}{a_1}\+\frac{1}{a_2}\+ \cds \+
\frac{1}{a_i},
\]
and set $C=Q_{k-1}+P_k+c\,Q_k$.

If $k$ is even,  set
\begin{align*}
a&= \frac{C-\sqrt{C^2+4}}{2},\\
b&= \frac{C+\sqrt{C^2+4}}{2}.
\end{align*}
Then
\begin{multline}\label{tas1ex}
[0; \overline{a_1,a_2, \dots a_k,c + d
m^{n}}]_{n=1}^{\infty}\\=\frac{P_k}{Q_k}+ \frac{1}{Q_k(m d Q_k+a)}
\frac{\sum_{n=0}^{\infty}\displaystyle{ \frac{(b/dQ_k)^n
m^{-n(n+3)/2}}{(1/m;1/m)_{n}(-a/dQ_k m^2;1/m)_{n}} }}
{\sum_{n=0}^{\infty}\displaystyle{ \frac{(b/dQ_k)^n
m^{-n(n+1)/2}}{(1/m;1/m)_{n}(-a/dQ_k m;1/m)_{n}}} }.
\end{multline}

If $k$ is odd,  set
\begin{align*}
a&= \frac{C-\sqrt{C^2-4}}{2},\\
b&= \frac{C+\sqrt{C^2-4}}{2}.
\end{align*}
Then
\begin{multline}\label{tas1ex2}
[0; \overline{a_1,a_2, \dots a_k,c + d
m^{n}}]_{n=1}^{\infty}\\=\frac{P_k}{Q_k}+ \frac{-1}{Q_k(m d Q_k +a)}
\frac{\sum_{n=0}^{\infty}\displaystyle{ \frac{(b/dQ_k)^n
m^{-n(n+3)/2}}{(1/m;1/m)_{n}(-a/dQ_k m^2;1/m)_{n}} }}
{\sum_{n=0}^{\infty}\displaystyle{ \frac{(b/dQ_k)^n
m^{-n(n+1)/2}}{(1/m;1/m)_{n}(-a/dQ_k m;1/m)_{n}}} }.
\end{multline}
\end{corollary}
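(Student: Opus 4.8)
The plan is to obtain Corollary \ref{apinter} as the specialization $e=c$ of Theorem \ref{apinter2}, exactly as Corollary \ref{T1} was derived from Theorem \ref{T2}. First I would observe that setting $e=c$ forces $C=Q_{k-1}+P_k+cQ_k=E$, so that the ratio $C/E$ appearing in the odd-indexed geometric partial quotients of \eqref{tas1ex21} and \eqref{tas1ex22} becomes $1$. The interlaced family on the left of those identities, namely $\overline{a_1,\dots,a_k,\,c+\tfrac{C}{E}dm^{2n-1},\,a_1,\dots,a_k,\,e+dm^{2n}}$, then collapses: the two progressions $c+\tfrac{C}{E}dm^{2n-1}$ and $e+dm^{2n}$ merge into the single progression $c+dm^{j}$, $j=1,2,3,\dots$, since the odd-power terms and the even-power terms interleave in the natural order within the expanded continued fraction. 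Consequently the quasi-period of length $2(k+1)$ reduces to the block $a_1,\dots,a_k,\,c+dm^{n}$ of length $k+1$, and the left-hand side becomes precisely $[0;\overline{a_1,\dots,a_k,c+dm^{n}}]_{n=1}^{\infty}$, the object of the corollary.

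Next I would check that the right-hand data specialize as claimed. Since $E=C$ we have $E/C=1$, so the prefactors $\pm E/C$ in \eqref{tas1ex21} and \eqref{tas1ex22} become $\pm1$, matching the $+1$ (even $k$) in \eqref{tas1ex} and the $-1$ (odd $k$) in \eqref{tas1ex2}. Likewise $E^2\pm 4E/C=C^2\pm4$, so the quadratic formulas for $a$ and $b$ in the even- and odd-$k$ cases of Theorem \ref{apinter2} reduce verbatim to those stated in the corollary. Every factor in the two $q$-series, namely $(b/dQ_k)^n$, the powers $m^{-n(n+3)/2}$ and $m^{-n(n+1)/2}$, and the shifted $q$-Pochhammer symbols in base $1/m$, is unchanged by the substitution, so the two series ratios agree term by term.

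The only genuine verification concerns the hypotheses. Under $e=c$ the integrality condition $Cdm/E\in\mathbb{N}$ of Theorem \ref{apinter2} becomes simply $dm\in\mathbb{N}$, which is assumed in the corollary, and $dm\in\mathbb{N}$ is common to both statements. I would also note that the added hypothesis $c+dm>0$, together with $d>0$ and $m>1$, gives $c+dm^{n}\geq c+dm>0$ for all $n\geq1$, so the resulting continued fraction is a genuine regular (Tasoevian) continued fraction. I expect no real obstacle here: the one point demanding care is the bookkeeping showing that the two merged progressions reindex cleanly to $c+dm^{n}$ and that the period truly halves, rather than any analytic difficulty.
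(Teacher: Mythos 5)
Your proposal is correct and follows essentially the same route as the paper, whose entire proof of Corollary \ref{apinter} is the one-line specialization ``Set $e=c$ in Theorem \ref{apinter2}.'' The additional bookkeeping you supply --- that $C=E$ collapses the doubled quasi-period $\overline{a_1,\dots,a_k,c+dm^{2n-1},a_1,\dots,a_k,c+dm^{2n}}$ into $\overline{a_1,\dots,a_k,c+dm^{n}}$, that the prefactors $\pm E/C$ and the formulas for $a$ and $b$ specialize verbatim, and that the integrality hypothesis $Cdm/E\in\mathbb{N}$ reduces to $dm\in\mathbb{N}$ --- is exactly the verification the paper leaves implicit.
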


\begin{proof}
Set $e=c$ in Theorem \ref{apinter2}.
\end{proof}

\begin{corollary}
Let $c$ and $m$ be  integers, $m>1$ and let $d$ be a positive
rational such that $d m \in \mathbb{N}$ and $c+ d m>0$. Let $k$ be
an even positive integer, let $F_i$ denote the $i$-th Fibonacci
number and set $C=2F_k+c\,F_{k+1}$. Set
\begin{align*}
a&= \frac{C-\sqrt{C^2+4}}{2},\\
b&= \frac{C+\sqrt{C^2+4}}{2}.
\end{align*}
Then
\begin{multline}\label{tas1ex1a}
[0; \overline{\underbrace{1,1,\dots,1,1}_{k},c + d
m^{n}}]_{n=1}^{\infty}\\=\frac{F_k}{F_{k+1}}+ \frac{1}{F_{k+1}(m d
F_{k+1}+a)} \frac{\sum_{n=0}^{\infty}\displaystyle{
\frac{(b/dF_{k+1})^n m^{-n(n+3)/2}}{(1/m;1/m)_{n}(-a/dF_{k+1}
m^2;1/m)_{n}} }} {\sum_{n=0}^{\infty}\displaystyle{
\frac{(b/dF_{k+1})^n m^{-n(n+1)/2}}{(1/m;1/m)_{n}(-a/dF_{k+1}
m;1/m)_{n}}} }.
\end{multline}
\end{corollary}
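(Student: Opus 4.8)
The plan is to recognize this statement as the special case of Corollary \ref{apinter} in which every partial quotient $a_i$ equals $1$, and then to identify the resulting convergents with Fibonacci numbers. So I would set $a_1 = a_2 = \cdots = a_k = 1$ in Corollary \ref{apinter} and track what happens to the quantities $P_i$, $Q_i$ and $C$ defined there. Since $k$ is assumed even, only the even branch of that corollary is relevant.

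First I would compute the convergents $P_i/Q_i$ of the finite continued fraction $[0;\underbrace{1,\dots,1}_{i}]$ appearing in the hypothesis. Using the standard three-term recurrences $P_i = a_i P_{i-1} + P_{i-2}$ and $Q_i = a_i Q_{i-1} + Q_{i-2}$ with the initial data $P_{-1}=1$, $P_0 = 0$, $Q_{-1}=0$, $Q_0 = 1$, and specializing to $a_i = 1$ for all $i$, both sequences satisfy the Fibonacci recurrence. A one-line induction then gives $P_i = F_i$ and $Q_i = F_{i+1}$, where $F_i$ denotes the $i$-th Fibonacci number; one checks the base cases $P_1/Q_1 = 1/1$, $P_2/Q_2 = 1/2$, $P_3/Q_3 = 2/3$ to fix the index shift.

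Next I would substitute these identifications into the definition $C = Q_{k-1} + P_k + c\,Q_k$ from Corollary \ref{apinter}. This yields $C = F_k + F_k + c\,F_{k+1} = 2F_k + c\,F_{k+1}$, which is precisely the constant in the present statement, and the quantities $a$ and $b$ built from $C$ therefore agree verbatim. Rewriting $P_k/Q_k = F_k/F_{k+1}$ and replacing every occurrence of $Q_k$ by $F_{k+1}$ in the right-hand side of \eqref{tas1ex} then produces \eqref{tas1ex1a} directly.

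I do not anticipate a genuine obstacle: the entire content is the elementary fact that the all-ones continued fraction has Fibonacci convergents, after which the result is an immediate specialization of Corollary \ref{apinter}. The only point requiring a moment's care is the index bookkeeping $Q_i = F_{i+1}$ (rather than $Q_i = F_i$), so that $Q_{k-1}=F_k$ and $Q_k=F_{k+1}$ are matched correctly in the formula for $C$ and in the series on the right.
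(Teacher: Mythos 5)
Your proposal is correct and follows exactly the paper's own route: the paper proves this corollary by specializing Corollary \ref{apinter} to $a_1=\cdots=a_k=1$ and invoking $[0;\underbrace{1,\dots,1}_{i}]=F_i/F_{i+1}$, which is precisely your identification $P_i=F_i$, $Q_i=F_{i+1}$ giving $C=2F_k+cF_{k+1}$. Your write-up merely makes explicit the induction and index bookkeeping that the paper leaves as a one-line remark.
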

\begin{proof}
This follows immediately from Corollary \ref{apinter}, upon noting
that
\[
[0;\underbrace{1,1,\dots,1,1}_{i}]=\frac{F_i}{F_{i+1}}.
\]
\end{proof}

We also require some preliminary results before our next
construction (see also \eqref{can1} above). The following theorem
can be found in \cite{LW92}, page 85.
\begin{theorem}\label{odcf}
The canonical contraction of $b_{0}+K_{n=1}^{\infty}a_{n}/b_{n}$
with $C_{0}=A_{1}/B_{1}$
\begin{align*}
&C_{k}=A_{2k+1}& &D_{k}=B_{2k+1}& &\text{ for } k=1,2,3,\ldots \, ,&
\end{align*}
exists if and only if $b_{2k+1} \not = 0 for K=0,1,2,3,\ldots$, and
in this case is given by {\allowdisplaybreaks
\begin{multline}\label{E:odcf}
\frac{b_{0}b_{1}+a_{1}}{b_{1}} -
\frac{a_{1}a_{2}b_{3}/b_{1}}{b_{1}(a_{3}+b_{2}b_{3})+a_{2}b_{3}} \-
\frac{a_{3}a_{4}b_{5}b_{1}/b_{3}}{a_{5}+b_{4}b_{5}+a_{4}b_{5}/b_{3}}\\
\- \frac{a_{5}a_{6}b_{7}/b_{5}}{a_{7}+b_{6}b_{7}+a_{6}b_{7}/b_{5}}
\- \frac{a_{7}a_{8}b_{9}/b_{7}}{a_{9}+b_{8}b_{9}+a_{8}b_{9}/b_{7}}
\+ \cds .
\end{multline}
}
\end{theorem}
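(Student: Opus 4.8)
The plan is to imitate the proof of the even-part formula (Theorem \ref{T:t1}) and derive \eqref{E:odcf} directly from the three-term recurrences for the canonical numerators and denominators of $b_0+K_{n=1}^{\infty}a_n/b_n$, namely $A_n=b_nA_{n-1}+a_nA_{n-2}$ and $B_n=b_nB_{n-1}+a_nB_{n-2}$, with $A_{-1}=1$, $A_0=b_0$, $B_{-1}=0$, $B_0=1$. Since the odd part is the contraction whose convergents are $C_k/D_k=A_{2k+1}/B_{2k+1}$, the first step is to collapse the three-term recurrence into a two-term recurrence linking the odd-indexed quantities. Starting from $A_{2k+1}=b_{2k+1}A_{2k}+a_{2k+1}A_{2k-1}$, I would eliminate $A_{2k}$ via $A_{2k}=b_{2k}A_{2k-1}+a_{2k}A_{2k-2}$ and eliminate $A_{2k-2}$ by solving $A_{2k-1}=b_{2k-1}A_{2k-2}+a_{2k-1}A_{2k-3}$ for $A_{2k-2}$, obtaining
\[
A_{2k+1}=\left(a_{2k+1}+b_{2k}b_{2k+1}+\frac{a_{2k}b_{2k+1}}{b_{2k-1}}\right)A_{2k-1}-\frac{a_{2k-1}a_{2k}b_{2k+1}}{b_{2k-1}}\,A_{2k-3},
\]
and the identical recurrence holds with $A$ replaced by $B$. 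The hypothesis that every odd-indexed $b_{2k+1}$ is nonzero enters precisely at the elimination of $A_{2k-2}$, since one divides by $b_{2k-1}$; a vanishing odd-indexed $b$ obstructs this step, which gives the \emph{only if} half of the existence claim (via the general contraction criterion of \cite{LW92}).

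This common two-term recurrence already exhibits, for $k\ge 2$, the partial denominators $a_{2k+1}+b_{2k}b_{2k+1}+a_{2k}b_{2k+1}/b_{2k-1}$ and partial numerators $-a_{2k-1}a_{2k}b_{2k+1}/b_{2k-1}$ of \eqref{E:odcf}. Next I would record the leading data: from the initial conditions $A_1=b_0b_1+a_1$ and $B_1=b_1$, so the zeroth convergent of the contraction is $C_0=A_1/B_1=(b_0b_1+a_1)/b_1$, which is exactly the leading term of \eqref{E:odcf}.

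The main obstacle, and the one genuine departure from the even-part argument, is the normalization of the first two steps. For the even part one has $D_0=B_0=1$ automatically, so the derived two-term recurrence is already that of an honest continued fraction; here, by contrast, $D_0=B_1=b_1$, which is not $1$ unless $b_1=1$, so the sequence $\{A_{2k+1},B_{2k+1}\}$ is not yet the canonical numerator/denominator sequence of any continued fraction. To repair this I would apply the equivalence transformation with scaling factors $\rho_0=b_1$ and $\rho_k=1$ for $k\ne 0$, under which a partial numerator $c_k$ and denominator $d_k$ produced by the pure two-term recurrence are replaced by $\rho_{k-2}c_k/\rho_k$ and $\rho_{k-1}d_k/\rho_k$. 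This fixes $C_0=A_1/B_1$, leaves every term with $k\ge 3$ untouched, but multiplies the first partial denominator by $b_1$ (turning $a_3+b_2b_3+a_2b_3/b_1$ into $b_1(a_3+b_2b_3)+a_2b_3$) and multiplies the second partial numerator by $b_1$ (turning $-a_3a_4b_5/b_3$ into $-a_3a_4b_5b_1/b_3$). These are precisely the asymmetric $b_1$-factors that distinguish \eqref{E:odcf} from the uniform tail, so the crux is carrying out the bookkeeping of this single rescaling exactly. Assembling the leading term, the two rescaled terms, and the uniform tail then yields \eqref{E:odcf}, completing the \emph{if} direction and the evaluation of the contraction.
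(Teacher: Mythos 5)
The paper offers no proof of Theorem \ref{odcf} for you to be compared against: it is quoted directly from \cite{LW92} (page 85), just as the even-part formula (Theorem \ref{T:t1}) is quoted from page 83 of the same book. Judged on its own, your derivation is correct, and it is the natural (essentially the textbook) contraction argument. The collapsed recurrence is right: eliminating $A_{2k}$ and then $A_{2k-2}$ from the three-term recurrence gives
$A_{2k+1}=\bigl(a_{2k+1}+b_{2k}b_{2k+1}+a_{2k}b_{2k+1}/b_{2k-1}\bigr)A_{2k-1}-\bigl(a_{2k-1}a_{2k}b_{2k+1}/b_{2k-1}\bigr)A_{2k-3}$,
identically for the $B$'s. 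Your identification of the normalization problem ($D_0=B_1=b_1\neq 1$, so the raw odd-indexed sequences are not canonical data for any continued fraction) is exactly what forces the asymmetric $b_1$'s in \eqref{E:odcf}, and your bookkeeping for the rescaling $\rho_0=b_1$, $\rho_k=1$ ($k\neq 0$) checks out term by term: the leading term becomes $(b_0b_1+a_1)/b_1$, the first partial numerator $-a_1a_2b_3/b_1$ is unchanged, the first partial denominator becomes $b_1(a_3+b_2b_3)+a_2b_3$, the second partial numerator acquires the extra factor $b_1$, and the tail is untouched. This reproduces \eqref{E:odcf} exactly.

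The one thin spot is the ``only if'' half of the existence claim. The fact that your particular elimination divides by $b_{2k-1}$ does not, by itself, show that no canonical contraction exists when some odd-indexed $b$ vanishes; a different derivation might conceivably avoid that division. What actually closes this direction is the determinant criterion you mention only parenthetically: for any continued fraction one has $C_kD_{k-1}-C_{k-1}D_k=(-1)^{k-1}c_1c_2\cdots c_k\neq 0$, since partial numerators are nonzero by definition, whereas for the prescribed sequences a short computation gives $A_{2k+1}B_{2k-1}-A_{2k-1}B_{2k+1}=-b_{2k+1}a_1a_2\cdots a_{2k}$, which vanishes precisely when $b_{2k+1}=0$ (the $a_i$ being nonzero). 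Adding that one computation would make the ``if and only if'' airtight; as written, the citation of the general criterion in \cite{LW92} is carrying all the weight in that direction.
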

The continued fraction \eqref{E:odcf} is called the \emph{odd} part
of $b_{0}+K_{n=1}^{\infty}a_{n}/b_{n}$. The following corollary
follows easily from Theorem \ref{odcf}. {\allowdisplaybreaks
\begin{corollary}\label{corcf7}
The odd part of the continued fraction
\begin{equation*}
\frac{c_{1}}{1} \- \frac{c_{2}}{1} \+ \frac{c_{2}}{1} \-
\frac{c_{3}}{1} \+ \frac{c_{3}}{1} \- \frac{c_{4}}{1} \+
\frac{c_{4}}{1} \- \cds
\end{equation*}
is
\begin{equation*}
c_{1} + \frac{c_{1}c_{2}}{1} \+ \frac{c_{2}c_{3}}{1} \+
\frac{c_{3}c_{4}}{1} \+ \cds .
\end{equation*}
\end{corollary}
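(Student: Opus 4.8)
The plan is to read the displayed continued fraction as an instance of $b_{0}+K_{n=1}^{\infty}a_{n}/b_{n}$ and then to substitute directly into the odd-part formula \eqref{E:odcf} of Theorem \ref{odcf}. Matching the alternating pattern $c_{1}/1\-c_{2}/1\+c_{2}/1\-c_{3}/1\+\cds$ against the standard form, I would set $b_{0}=0$, take $b_{n}=1$ for every $n\geq1$, and put $a_{1}=c_{1}$ together with $a_{2k}=-c_{k+1}$ and $a_{2k+1}=c_{k+1}$ for each $k\geq1$. Since $b_{2k+1}=1\neq0$ for all $k\geq0$, the hypothesis of Theorem \ref{odcf} is satisfied, so the odd part exists and equals \eqref{E:odcf} with these values.

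Next I would evaluate \eqref{E:odcf} term by term. The leading term $\tfrac{b_{0}b_{1}+a_{1}}{b_{1}}$ collapses to $c_{1}$. For a generic correction term the simplification rests on two observations that hold simultaneously: first, each denominator reduces to $1$, because $a_{2k+1}+a_{2k}=c_{k+1}-c_{k+1}=0$ annihilates all but the surviving $b_{2k}b_{2k+1}=1$; second, the factor $a_{2k}=-c_{k+1}$ sitting in each numerator supplies a minus sign that cancels the explicit $\-$ printed in front of the term. Using $a_{2k-1}=c_{k}$ and $a_{2k}=-c_{k+1}$, the numerator of the $k$-th term is $a_{2k-1}a_{2k}=-c_{k}c_{k+1}$, so after the sign flip the term is $+\,c_{k}c_{k+1}/1$. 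Assembling the pieces yields exactly $c_{1}+c_{1}c_{2}/1\+c_{2}c_{3}/1\+c_{3}c_{4}/1\+\cds$, as claimed.

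The only point demanding attention is that \eqref{E:odcf} writes its first few terms in a slightly different shape from the generic one: the leading fraction uses the $b_{0}b_{1}+a_{1}$ form, and the $k=1$ and $k=2$ correction terms carry extra $b_{1}$ and $b_{3}$ factors and a modified denominator. I would therefore verify these two boundary terms by hand, checking that each still gives denominator $1$ and numerator $-c_{k}c_{k+1}$ (so, after the sign, $+c_{k}c_{k+1}/1$) before invoking the uniform pattern for $k\geq3$; because every $b_{n}=1$, these boundary cases collapse in precisely the same way. There is no substantive obstacle here — the entire argument is a direct specialization of Theorem \ref{odcf}, and its essential mechanism is the repeated collapse of each denominator to $1$ forced by the telescoping cancellation $a_{2k+1}+a_{2k}=0$.
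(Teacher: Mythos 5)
Your proposal is correct and is exactly the argument the paper intends: the paper offers no written proof beyond ``follows easily from Theorem \ref{odcf},'' and your direct substitution ($b_0=0$, all $b_n=1$, $a_1=c_1$, $a_{2k}=-c_{k+1}$, $a_{2k+1}=c_{k+1}$) into \eqref{E:odcf}, with the telescoping $a_{2k+1}+a_{2k}=0$ collapsing every denominator to $1$ and the sign of $a_{2k}$ absorbing each leading minus, is precisely that easy specialization. Your care with the non-generic $k=1$ and $k=2$ boundary terms is a welcome extra check and changes nothing, since all $b_n=1$.
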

}

This corollary implies the following result.

\begin{corollary}\label{corfl}
Let $p$ and $a_i$, $i\geq 1$ be complex numbers. If the continued
fraction
\begin{align}\label{ocf}
\frac{1/p}{1}&  \- \frac{p/a_1}{1}\+\frac{p/a_1}{1}\-
\frac{1/pa_2}{1} \+\frac{1/pa_2}{1}\\
&  \- \frac{p/a_3}{1}\+\frac{p/a_3}{1}\-
\frac{1/pa_4}{1} \+\frac{1/pa_4}{1}\- \cds \notag\\
&  \- \frac{p/a_{2n-1}}{1}\+\frac{p/a_{2n-1}}{1}\-
\frac{1/pa_{2n}}{1} \+\frac{1/pa_{2n}}{1}\- \cds \notag
\end{align}
converges, then
\begin{equation}\label{ocf2}
\left[0;
\overline{p,\frac{-a_{2n-1}}{p^2},-p,a_{2n}}\,\,\right]_{n=1}^{\infty}
 = \frac{1}{p}+[0;a_1,a_2,a_3,\dots
].
\end{equation}
\end{corollary}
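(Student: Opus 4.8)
The plan is to view \eqref{ocf} as an equivalence transform of the Hurwitzian continued fraction on the left of \eqref{ocf2}, to extract the odd part of \eqref{ocf} via Corollary \ref{corcf7}, and to simplify that odd part by a second equivalence transformation. First I would apply the standard equivalence transformation $\K{n=1}{\infty}(\alpha_{n}/\beta_{n})=\K{n=1}{\infty}(r_{n-1}r_{n}\alpha_{n}/(r_{n}\beta_{n}))$, with $r_{0}=1$ and all $r_{n}\neq 0$, to the regular continued fraction $[0;\overline{p,-a_{2n-1}/p^{2},-p,a_{2n}}]_{n=1}^{\infty}$. This continued fraction has all partial numerators equal to $1$ and partial denominators $\beta_{1}=p$, $\beta_{2}=-a_{1}/p^{2}$, $\beta_{3}=-p$, $\beta_{4}=a_{2}$, repeating with period four. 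Taking $r_{n}=1/\beta_{n}$ clears every denominator to $1$, while the new partial numerators $r_{n-1}r_{n}=1/(\beta_{n-1}\beta_{n})$ evaluate (with $\beta_{0}:=1$) to $1/p$, $-p/a_{1}$, $p/a_{1}$, $-1/(pa_{2})$, $1/(pa_{2}),\dots$, which is exactly \eqref{ocf}. Since an equivalence transformation leaves every convergent unchanged, the left side of \eqref{ocf2} and \eqref{ocf} have identical values and identical convergence behaviour.

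Next I would match \eqref{ocf} to the pattern of Corollary \ref{corcf7} by reading off $c_{1}=1/p$ and, for $k\geq 1$, $c_{2k}=p/a_{2k-1}$, $c_{2k+1}=1/(pa_{2k})$. The corollary then presents the odd part of \eqref{ocf} as $c_{1}+\K{n=1}{\infty}(c_{n}c_{n+1}/1)$. The consecutive products simplify nicely: $c_{1}c_{2}=1/a_{1}$ and in general $c_{n}c_{n+1}=1/(a_{n-1}a_{n})$ (with $a_{0}:=1$), so the odd part is $\frac{1}{p}+\frac{1/a_{1}}{1}\+\frac{1/(a_{1}a_{2})}{1}\+\frac{1/(a_{2}a_{3})}{1}\+\cds$. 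A second equivalence transformation, applied to $[0;a_{1},a_{2},a_{3},\dots]=\K{n=1}{\infty}(1/a_{n})$ with scaling factors $s_{n}=1/a_{n}$ and $s_{0}=1$, sends it to precisely this tail, whence the odd part equals $1/p+[0;a_{1},a_{2},a_{3},\dots]$. Finally, invoking the hypothesis that \eqref{ocf} converges, its odd part (the contraction retaining the odd-indexed convergents) converges to the same limit, and chaining the three equalities yields \eqref{ocf2}.

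I expect the sign bookkeeping in the first transformation to be the main obstacle. The negative partial quotients $-a_{2n-1}/p^{2}$ and $-p$ are exactly what generate the alternating $\+/\-$ pattern and the signed numerators of \eqref{ocf}, so the verification that $r_{n}=1/\beta_{n}$ reproduces those signs term by term, rather than introducing a spurious global sign, is the step I would check most carefully; everything else reduces to the two routine equivalence transformations and a direct appeal to Corollary \ref{corcf7}.
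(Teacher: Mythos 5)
Your proposal is correct and follows essentially the same route as the paper's own proof: an equivalence (similarity) transformation identifying \eqref{ocf} with the left side of \eqref{ocf2}, followed by passage to the odd part via Corollary \ref{corcf7} and simplification of the products $c_{n}c_{n+1}$ to recover $\frac{1}{p}+[0;a_1,a_2,a_3,\dots]$. The only difference is that you spell out explicitly (with the scaling factors $r_n=1/\beta_n$ and the sign check) what the paper compresses into ``is easily seen to be equivalent \dots after a sequence of similarity transformations.''
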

\begin{proof}
The continued fraction at \eqref{ocf} is easily seen to be
equivalent to the continued fraction on the left side of
\eqref{ocf2}, after a sequence of similarity transformations is
applied to the former continued fraction to transform all the
partial numerators into ``1"'s. On the other hand, since the
continued fraction at \eqref{ocf} converges, it is equal to its odd
part, which, by Corollary \ref{corcf7}, is the continued fraction
\begin{align*}
&\frac{1}{p} +
\frac{1/a_1}{1}\+\frac{1/a_1a_2}{1}\+\frac{1/a_2a_3}{1}
\+\frac{1/a_3a_4}{1} \+ \cds\\
&=\frac{1}{p} +[0;a_1,a_2,a_3,\dots ].
\end{align*}
\end{proof}

We will also make use of Worpitzky's Theorem  (see \cite{LW92}, pp.
35--36) to ensure convergence of the continued fraction at
\eqref{ocf}.
\begin{theorem}(Worpitzky)
 Let the continued fraction $K_{n=1}^{\infty}a_{n}/1$ be such that
$|a_{n}|\leq 1/4$ for $n \geq 1$. Then$K_{n=1}^{\infty}a_{n}/1$
converges.
 All approximants of the continued fraction lie in the disc $|w|<1/2$ and the value of the
continued fraction is in the disk $|w|\leq1/2$.
\end{theorem}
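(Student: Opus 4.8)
The plan is to analyze $K_{n=1}^{\infty}a_{n}/1$ through its linear fractional maps. Define $s_{n}(w):=a_{n}/(1+w)$ and let $V:=\{w:|w|\le 1/2\}$. The engine is a one-line estimate: for $w\in V$ one has $|1+w|\ge 1-|w|\ge 1/2$, so $|s_{n}(w)|=|a_{n}|/|1+w|\le (1/4)/(1/2)=1/2$; hence $s_{n}(V)\subseteq V$ for every $n$. Writing $S_{n}:=s_{1}\circ\cdots\circ s_{n}$, the $n$-th approximant is $S_{n}(0)$, and since $0\in V$ every approximant lies in $V$, and so does the limit once convergence is established. This already delivers the value-region claim $|w|\le 1/2$. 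To upgrade the approximants to the \emph{open} disk, I would iterate the estimate from the innermost map outward: set $\rho_{1}:=1/4$ and $\rho_{j+1}:=(1/4)/(1-\rho_{j})$. Then $|s_{n}(0)|\le\rho_{1}$, and if an intermediate value has modulus $\le\rho_{j}$, one more application gives modulus $\le\rho_{j+1}$. A fixed-point computation shows $\rho_{j+1}=\rho_{j}$ forces $(\rho-1/2)^{2}=0$, so the increasing sequence $\rho_{j}$ stays strictly below $1/2$ and converges up to $1/2$; thus $|S_{n}(0)|\le\rho_{n}<1/2$, giving the strict bound.

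For convergence I would use the canonical numerators and denominators $A_{n},B_{n}$, so that $S_{n}(0)=A_{n}/B_{n}$ and the determinant identity yields $f_{n}-f_{n-1}=(-1)^{n-1}(a_{1}\cdots a_{n})/(B_{n}B_{n-1})$. Since $|a_{1}\cdots a_{n}|\le 4^{-n}$, it suffices to prove $\sum_{n}4^{-n}/(|B_{n}||B_{n-1}|)<\infty$, i.e. a sufficiently strong lower bound on $|B_{n}|$. Setting $h_{n}:=B_{n}/B_{n-1}$, the recursion $h_{n}=1+a_{n}/h_{n-1}$ with $h_{1}=1$ and the basic estimate give $|h_{n}|\ge 1/2$ by induction, hence $|B_{n}|\ge 2^{-n}$. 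This makes the sequence of approximants tractable and reduces everything to controlling the denominator growth.

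The hard part is that the bound $|B_{n}|\ge 2^{-n}$ is the exactly critical case and is too weak: it only gives $|f_{n}-f_{n-1}|\le 1/2$. The extremal choice $a_{n}\equiv -1/4$ has $B_{n}=(n+1)2^{-n}$ (the characteristic equation $x^{2}-x+1/4=0$ has the double root $1/2$), so the true denominator growth carries an extra linear factor $n$, and it is precisely this factor that makes the difference series summable, with terms $O(1/n^{2})$. The main obstacle is therefore establishing the refined estimate $|B_{n}|\ge c\,(n+1)2^{-n}$ — equivalently, quantifying how $|h_{n}|$ separates from its critical value $1/2$ — because equality $|a_{n}|=1/4$ is permitted and the whole theorem is sharp at this threshold. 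An equivalent route that packages the same difficulty geometrically is the nested-disk argument: the disks $V_{n}:=S_{n}(V)$ are closed, nonempty, and nested, so $\bigcap_{n}V_{n}\ne\emptyset$, and one shows their radii $r_{n}=(|a_{1}\cdots a_{n}|/2)/(|B_{n}|^{2}-|B_{n-1}|^{2}/4)$ tend to $0$; this again comes down to the same delicate lower bound on the denominators. Either way, once the Euclidean shrinking of the value regions to a single point is secured, that point is the value of the continued fraction, it lies in $V$, and the approximants converge to it.
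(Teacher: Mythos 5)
Your first paragraph correctly disposes of the value-region claims: $s_n(V)\subseteq V$, the bounds $\rho_1=1/4$, $\rho_{j+1}=(1/4)/(1-\rho_j)$ stay strictly below $1/2$, so every approximant $S_n(0)$ lies in $|w|<1/2$, and the limit, once it exists, lies in $|w|\le 1/2$. The genuine gap is that the existence of the limit --- the convergence assertion, which is the heart of Worpitzky's theorem --- is never proved. You set up the difference formula, observe that your bound $|B_n|\ge 2^{-n}$ is exactly critical, and then declare the refined estimate $|B_n|\ge c\,(n+1)2^{-n}$ to be ``the main obstacle'' without establishing it; the nested-disk alternative is likewise deferred to ``the same delicate lower bound''. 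A proof that ends by naming its missing lemma is not a proof. (For comparison: the paper itself offers no proof, quoting the theorem from \cite{LW92}; a standard way to finish is to apply the equivalence transformation $r_n\equiv 2$, turning $K(a_n/1)$ into $K(4a_n/2)$, which satisfies the \'Sleszy\'nski--Pringsheim condition $|4a_n|+1\le 2$; induction then gives $|B_n'|-|B_{n-1}'|\ge|4^n a_1\cdots a_n|$, whence $|f_n-f_{n-1}|\le 1/|B_{n-1}'|-1/|B_n'|$, and the series of differences telescopes to a finite bound with no growth asymptotics needed.)

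The gap can, however, be closed with exactly the tools you already set up: run your own induction on $h_n=1+a_n/h_{n-1}$, $h_1=1$, but with the $n$-dependent bound $|h_n|\ge (n+1)/(2n)$ in place of the constant $1/2$. Indeed, if $|h_{n-1}|\ge n/(2(n-1))$, then $|h_n|\ge 1-\frac{1/4}{|h_{n-1}|}\ge 1-\frac{n-1}{2n}=\frac{n+1}{2n}$, and the base case is $|h_1|=1$. Multiplying these bounds gives precisely the refined estimate you asked for, with $c=1$:
\[
|B_n|=\prod_{i=1}^{n}|h_i|\;\ge\;\prod_{i=1}^{n}\frac{i+1}{2i}\;=\;\frac{n+1}{2^{n}},
\]
which is sharp for $a_n\equiv -1/4$. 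Consequently $|f_n-f_{n-1}|\le 4^{-n}/\bigl(|B_n|\,|B_{n-1}|\bigr)\le \frac{1}{2n(n+1)}$, so the series of differences converges absolutely and the approximants converge; combined with your first paragraph this completes the theorem. So your plan was viable, and the missing step is a two-line sharpening of your own induction rather than a new idea --- but as submitted, the central claim of the theorem is left unsupported.
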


Corollary \ref{corfl} can now be used to  derive the limit of new
Tasoevian continued fractions from  existing Tasoevian continued
fractions whose values are known. The new continued fraction will
contain an additional free parameter. We give two examples.

\begin{theorem}\label{t1ex}
Let $c$, $e$, $m>1$ and $p>1$ be integers. Let $d$ be a positive
rational such that $dm, dcm/e \in \mathbb{N}$,  and $c+ d c m/e-1,
e+ d m^2-1>0$. Let
\begin{equation*}
a= \frac{e-\sqrt{e^2+4 e/cp^2}}{2},\hspace{25pt}  b=
\frac{e+\sqrt{e^2+4 e/cp^2}}{2}.
\end{equation*}
Then
\begin{multline}\label{tas2ex}
\left [0; \overline{p-1,1,c + \frac{d c}{e} m^{2 n-1}-1,p-1,1, e + d
m^{2 n}-1}\,\,\right ]_{n=1}^{\infty}\\= \frac{1}{p}+\frac{e/cp^2}{m
d+a} \frac{\sum_{n=0}^{\infty}\displaystyle{ \frac{(b/d)^n
m^{-n(n+3)/2}}{(1/m;1/m)_{n}(-a/d m^2;1/m)_{n}} }}
{\sum_{n=0}^{\infty}\displaystyle{ \frac{(b/d)^n
m^{-n(n+1)/2}}{(1/m;1/m)_{n}(-a/d m;1/m)_{n}}} }.
\end{multline}
\end{theorem}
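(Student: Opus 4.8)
The plan is to obtain \eqref{tas2ex} by applying Corollary \ref{corfl} to the Tasoevian continued fraction of Theorem \ref{T2}, with the parameter $c$ there replaced by $cp^2$. Concretely, I would set
\[
a_{2n-1}=cp^2+\frac{d\,cp^2}{e}\,m^{2n-1},\qquad a_{2n}=e+d\,m^{2n}\qquad(n\ge 1),
\]
so that
\[
[0;a_1,a_2,a_3,\dots]=\Big[0;\overline{cp^2+\tfrac{d(cp^2)}{e}m^{2n-1},\,e+d\,m^{2n}}\Big]_{n=1}^{\infty}.
\]
The point of this choice is that $\tfrac{-a_{2n-1}}{p^2}=-\big(c+\tfrac{dc}{e}m^{2n-1}\big)$ is a negative integer, which is exactly what is needed in order to later produce the partial quotient $c+\tfrac{dc}{e}m^{2n-1}-1$ appearing in \eqref{tas2ex}.

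First I would evaluate $[0;a_1,a_2,\dots]$ using Theorem \ref{T2}. The hypotheses transfer cleanly: since $\tfrac{dcm}{e},dm\in\mathbb{N}$ we get $\tfrac{d(cp^2)m}{e}=p^2\,\tfrac{dcm}{e}\in\mathbb{N}$ and $dm^2=m\cdot dm\in\mathbb{N}$, while the positivity conditions $cp^2+\tfrac{d(cp^2)m}{e}>0$ and $e+dm^2>0$ follow from $c+\tfrac{dc}{e}m-1>0$ and $e+dm^2-1>0$. With $c$ replaced by $cp^2$, the quantities $a,b$ of Theorem \ref{T2} become precisely the $a,b$ in the present statement, the prefactor $\tfrac{e/c}{md+a}$ becomes $\tfrac{e/cp^2}{md+a}$, and the two $q$-series are unchanged in form (they involve only $a,b,d,m$). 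Hence $[0;a_1,a_2,\dots]$ equals the second summand on the right of \eqref{tas2ex}.

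Next I would verify that the continued fraction \eqref{ocf} built from these $a_i$ converges, so that Corollary \ref{corfl} applies. Because $m>1$ and $d>0$, the partial numerators $p/a_{2n-1}$ and $1/(p\,a_{2n})$ tend to $0$ geometrically, so beyond some index they are all bounded by $1/4$; Worpitzky's theorem then yields convergence of a tail of \eqref{ocf}, hence of \eqref{ocf} itself. Corollary \ref{corfl} now gives
\[
\Big[0;\overline{p,\tfrac{-a_{2n-1}}{p^2},-p,a_{2n}}\Big]_{n=1}^{\infty}=\frac1p+[0;a_1,a_2,\dots],
\]
so it remains only to identify the left-hand side with the regular continued fraction in \eqref{tas2ex}.

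This last identification is the step I expect to be the main obstacle, and I would carry it out with the two rewriting rules quoted earlier, namely $[m,-n,\alpha]=[m-1,1,n-1,-\alpha]$ together with $-[x_0;x_1,x_2,\dots]=[-x_0;-x_1,-x_2,\dots]$. Starting from $[0;p,-(c+\tfrac{dc}{e}m),-p,e+dm^2,p,-(c+\tfrac{dc}{e}m^3),\dots]$, one application of the first rule to the block $[p,-(c+\tfrac{dc}{e}m),\alpha]$ produces the partial quotients $p-1,1,c+\tfrac{dc}{e}m-1$ and negates the remaining tail; by the second rule that tail becomes $[p;-(e+dm^2),-p,\dots]$, to which the first rule applies again, producing $p-1,1,e+dm^2-1$ and restoring a tail of exactly the original shape but with $n$ advanced by one. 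Thus each pair of applications peels off one full quasi-period $p-1,1,c+\tfrac{dc}{e}m^{2n-1}-1,p-1,1,e+dm^{2n}-1$ and returns a continued fraction of the same form, so by this self-similar induction the left-hand side equals $[0;\overline{p-1,1,c+\tfrac{dc}{e}m^{2n-1}-1,p-1,1,e+dm^{2n}-1}]_{n=1}^{\infty}$. The delicate points are keeping the alternating tail-negations straight across the iteration and checking, using $p>1$ and the hypotheses $c+\tfrac{dc}{e}m-1>0$ and $e+dm^2-1>0$, that every resulting partial quotient is a positive integer, so that the object really is the regular continued fraction claimed.
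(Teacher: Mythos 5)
Your proposal is correct and takes essentially the same route as the paper's own proof: replace $c$ by $cp^2$ in Theorem \ref{T2}, let the resulting continued fraction be the right side of \eqref{ocf2} in Corollary \ref{corfl}, and then strip the negative partial quotients via $[m,-n,\alpha]=[m-1,1,n-1,-\alpha]$ to obtain the regular expansion in \eqref{tas2ex}. Your added details --- checking that the hypotheses of Theorem \ref{T2} transfer under $c\mapsto cp^2$, invoking Worpitzky's theorem for the convergence of \eqref{ocf}, and carrying out the self-similar iteration of the sign-removal rules --- are exactly the steps the paper's terse proof leaves implicit.
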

\begin{proof}
Replace $c$ by $cp^2$ in Theorem \ref{T2} and let the resulting
continued fraction be the continued fraction on the right side of
\eqref{ocf2}. After the negatives are removed (see the remark before
Corollary \ref{cneg})  from the corresponding continued fraction on
the left side of \eqref{ocf2}, the continued fraction on the left
side at \eqref{tas2ex} is produced and the result follows.
\end{proof}

\begin{theorem}\label{T3ex}
Let $u$, and $v$ be  positive integers, $u, v>1$, and let $e$ and
$f$ be  rationals such that $e u-1, f v-1 \in \mathbb{N}$.  Then
\begin{multline}\label{tas3ex}
[0; \overline{p-1,1,e u^{n}-1, p-1,1,  f v^{
n}-1}]_{n=1}^{\infty}\\=
\frac{1}{p}+\phantom{asdasdasdasdasdasdasdasdasdasd}\\\left(\frac{1}{e
p^2 u} -\frac{1}{e^2 f p^4 u^2 v +e p^2}\right)
\frac{\sum_{n=0}^{\infty}\displaystyle{ \frac{(e f p^2)^{-n}
(uv)^{-n(n+3)/2}}{(1/uv;1/uv)_{n}(-1/efp^2u^3 v^2;1/uv)_{n}} }}
{\sum_{n=0}^{\infty}\displaystyle{ \frac{(e f p^2)^{-n}
(uv)^{-n(n+1)/2}}{(1/uv;1/uv)_{n}(-1/efp^2u^2 v;1/uv)_{n}} }  }.
\end{multline}
\end{theorem}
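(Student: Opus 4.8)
The plan is to derive \eqref{tas3ex} from Theorem \ref{T3} in exactly the way Theorem \ref{t1ex} was derived from Theorem \ref{T2}, namely by feeding a known Tasoevian continued fraction into Corollary \ref{corfl}. First I would replace $e$ by $ep^2$ throughout Theorem \ref{T3}, so that $[0;\overline{ep^2u^n,fv^n}]_{n=1}^\infty$ is evaluated as the ratio of series on the right of \eqref{tas3ex}; the substitution $e\mapsto ep^2$ carries every occurrence of $e$ in \eqref{tas3} into the corresponding expression in \eqref{tas3ex}. I would then take this continued fraction to be the $[0;a_1,a_2,a_3,\dots]$ on the right-hand side of \eqref{ocf2}, so that $a_{2n-1}=ep^2u^n$ and $a_{2n}=fv^n$.

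With these choices the left-hand side of \eqref{ocf2} becomes $\left[0;\overline{p,-a_{2n-1}/p^2,-p,a_{2n}}\right]_{n=1}^\infty=\left[0;\overline{p,-eu^n,-p,fv^n}\right]_{n=1}^\infty$, since $-a_{2n-1}/p^2=-eu^n$. Corollary \ref{corfl} then gives
\[
\left[0;\overline{p,-eu^n,-p,fv^n}\right]_{n=1}^\infty=\frac1p+[0;\overline{ep^2u^n,fv^n}]_{n=1}^\infty,
\]
whose right-hand side is precisely $1/p$ plus the series ratio of \eqref{tas3ex}. To invoke Corollary \ref{corfl} I must check that the associated continued fraction \eqref{ocf} converges; its partial numerators are $p/a_{2n-1}=1/(epu^n)$ and $1/(pa_{2n})=1/(pfv^n)$, which are already bounded by $1/4$ from the second term onward and tend to $0$ geometrically, so Worpitzky's theorem applied to a tail yields convergence.

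It remains to rewrite $\left[0;\overline{p,-eu^n,-p,fv^n}\right]_{n=1}^\infty$ as the regular continued fraction on the left of \eqref{tas3ex} by removing its negative partial quotients. Using $[m,-n,\alpha]=[m-1,1,n-1,-\alpha]$ on the first negative entry $-eu$ (preceded by $p$) replaces the block $p,-eu$ by $p-1,1,eu-1$ and negates the remaining tail, which then begins $p,-fv,-p,eu^2,\dots$; a second application to $-fv$ replaces $p,-fv$ by $p-1,1,fv-1$ and negates the tail again. The key observation is that after these two steps the doubly negated tail $p,-eu^2,-p,fv^2,\dots$ is identical in form to the original tail with the index advanced by one, so the removal procedure is periodic and produces exactly $\left[0;\overline{p-1,1,eu^n-1,p-1,1,fv^n-1}\right]_{n=1}^\infty$. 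The hypotheses $eu-1,fv-1\in\mathbb N$ together with $u,v,p>1$ guarantee every resulting partial quotient is a positive integer (indeed $eu^n-1\geq eu-1\geq1$ and $fv^n-1\geq fv-1\geq1$), so the output is a genuine regular continued fraction.

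The main obstacle I anticipate is the bookkeeping in this last step: one must track the propagation of signs through the tail carefully to confirm that the two negations compose to the identity over each full quasi-period, and must verify the positivity of each partial quotient so that the removal identities apply at every stage. Once the periodicity of the sign-flipping is established, the evaluation follows immediately by combining Corollary \ref{corfl} with the substituted form of Theorem \ref{T3}.
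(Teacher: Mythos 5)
Your proposal is correct and follows essentially the same route as the paper: replace $e$ by $ep^2$ in Theorem \ref{T3}, feed the result into Corollary \ref{corfl} (with convergence of \eqref{ocf} secured via Worpitzky's theorem), and then remove the negative partial quotients from $\left[0;\overline{p,-eu^{n},-p,fv^{n}}\right]_{n=1}^{\infty}$ using $[m,-n,\alpha]=[m-1,1,n-1,-\alpha]$. In fact your write-up supplies the sign-propagation and positivity bookkeeping that the paper's one-line proof leaves implicit.
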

\begin{proof}
The proof is similar to that of Theorem \ref{t1ex}, except we
replace $e$ with $e p^2$ in Theorem \ref{T3}.
\end{proof}

Remark: It is clear that many other continued fractions of Tasoevian
type could be produced from those listed in this section, by either
replacing various parameters by their negatives, or applying
Corollary \ref{corfl} differently (for example, by replacing $m$ by
$mp$ or $mp^2$ (instead of replacing $e$ by $ep^2$) in Theorems
\ref{t1ex} and \ref{T3ex}). However, we feel these methods have been
sufficiently illustrated here and refrain from further examples.

\subsection{Some implications of a result of Hurwitz and
Ch\^{a}telet.} Some of the numbers whose Hurwitzian or Tasoevian
continued fraction expansions are described in this paper are of the
form \[ \beta = \frac{r \alpha +t}{s},
\]
where $\alpha$ is a number with a known continued fraction expansion
and $r$, $s$ and $t$ are integers.  Sometime after completing an
earlier version of this paper, I became aware of two recent papers
by Takao Komatsu \cite{K07, K08} that considered some similar types
Hurwitzian and Tasoevian continued fractions. Here I will indicate a
framework that contains all of these.

Komatsu's results derive from applications of the following lemma,
which he says is essentially due to Hurwitz and Ch\^{a}telet.
\begin{lemma}
Let $[a_0;a_1,a_2,\dots]$ be the regular continued fraction of the
irrational number $\alpha$ and denote its $n$-th convergent by
$p_n/q_n=[a_0;a_1,\dots, a_n]$. Moreover, let $\beta=(r_o
\alpha+t_0)/s_0$, where $r_0$, $s_0$ and $t_0$ are integers with
$r_0>0$, $s_0>0$ and $r_0s_0=N>1$. For an arbitrary index $\nu\geq
1$ we have
\[
\frac{r_0[a_0;a_1,a_2,\dots, a_{\nu-1}]+t_0}{s_0}=\frac{r_0p_{\nu
-1}+t_0q_{\nu -1}}{s_0q_{\nu -1}}=[b_0;b_1,b_2,\dots , b_{\mu -1}]
\]
where the index $\mu$ is adjusted so that $\mu \equiv \nu (\mod 2)$.
Denote its convergent by
\[
\frac{p'_{\mu -1}}{q_{\mu -1}}=[b_0;b_1,b_2,\dots , b_{\mu -1}].
\]
Then three integers $t_1$, $r_1$ and $s_1$ are uniquely given
satisfying the matrix formula
\[
\left (
\begin{matrix}
r_0 & t_0\\
0&s_0
\end{matrix}
\right )
\left (
\begin{matrix}
p_{\nu -1}  & p_{\nu -2}\\
q_{\nu -1}  & q_{\nu -2}
\end{matrix}
\right ) = \left (
\begin{matrix}
p'_{\mu -1}  & p'_{\mu -2}\\
q'_{\mu -1}  & q'_{\mu -2}
\end{matrix}
\right ) \left (
\begin{matrix}
r_1 & t_1\\
0&s_1
\end{matrix}
\right ),
\]
where $r_1>0$, $s_1>0$, $r_1s_1=N$, $-s_1\leq t_1\leq r_1$ and
$\beta=[b_0;b_1,\dots , b_{\mu -1},\beta_{\mu}]$ with
$\beta_{\mu}=(r_1 \alpha_{\nu}+t_1)/s_1$.
\end{lemma}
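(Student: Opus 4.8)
The plan is to translate the whole statement into the language of $2\times 2$ integer matrices acting on the projective line $\widehat{\RR}$ by Möbius transformations, so that the ``matrix formula'' becomes the engine of the proof rather than an afterthought. Writing $M_0=\left(\begin{smallmatrix} r_0 & t_0\\ 0 & s_0\end{smallmatrix}\right)$ for the map $x\mapsto(r_0x+t_0)/s_0$, one has $\beta=M_0\cdot\alpha$; let $P_\nu=\left(\begin{smallmatrix} p_{\nu-1} & p_{\nu-2}\\ q_{\nu-1} & q_{\nu-2}\end{smallmatrix}\right)$ and $P'_\mu=\left(\begin{smallmatrix} p'_{\mu-1} & p'_{\mu-2}\\ q'_{\mu-1} & q'_{\mu-2}\end{smallmatrix}\right)$ be the convergent matrices of the two finite continued fractions. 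I will freely use the standard identities $\det P_\nu=(-1)^{\nu}$, $\det P'_\mu=(-1)^{\mu}$, $\det M_0=r_0s_0=N$, the factorization of $P_\nu$ into the elementary matrices $\left(\begin{smallmatrix} a_i & 1\\ 1&0\end{smallmatrix}\right)$, and the compatibility of the Möbius action with matrix multiplication.

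The first chain of displayed equalities is immediate after substituting $[a_0;\dots,a_{\nu-1}]=p_{\nu-1}/q_{\nu-1}$, and the appearance of $[b_0;\dots,b_{\mu-1}]$ is simply a definition: the rational $x:=(r_0p_{\nu-1}+t_0q_{\nu-1})/(s_0q_{\nu-1})$ has exactly two regular continued fraction expansions, whose lengths differ by one, and I select the one of length $\mu$ with $\mu\equiv\nu\pmod{2}$; this is the only place the parity condition enters, and it is forced on us by a determinant count below. Having fixed $P'_\mu$ (so that $p'_{\mu-1}/q'_{\mu-1}=x$ in lowest terms), I define $M_1:=(P'_\mu)^{-1}M_0P_\nu$, which makes the matrix formula hold by construction and is an \emph{integer} matrix because $\det P'_\mu=\pm1$ places $P'_\mu$ in $\mathrm{GL}_2(\mathbb{Z})$. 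Computing the lower-left entry of $M_1$ gives $(-1)^{\mu}\bigl[p'_{\mu-1}s_0q_{\nu-1}-q'_{\mu-1}(r_0p_{\nu-1}+t_0q_{\nu-1})\bigr]$, which vanishes precisely because $x=p'_{\mu-1}/q'_{\mu-1}$ cross-multiplies to that bracket; hence $M_1=\left(\begin{smallmatrix} r_1 & t_1\\ 0 & s_1\end{smallmatrix}\right)$. Taking determinants yields $r_1s_1=N(-1)^{\nu-\mu}$, and the parity choice is exactly what turns the sign into $+1$, giving $r_1s_1=N$.

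The sign and size conditions come from reading off the bottom row of $M_0P_\nu=P'_\mu M_1$, namely $s_0q_{\nu-1}=r_1q'_{\mu-1}$ and $s_0q_{\nu-2}=t_1q'_{\mu-1}+s_1q'_{\mu-2}$. The first gives $r_1=s_0q_{\nu-1}/q'_{\mu-1}>0$ as a ratio of positive quantities, whence $s_1=N/r_1>0$. For the bounds I solve the second as $t_1q'_{\mu-1}=s_0q_{\nu-2}-s_1q'_{\mu-2}$ and multiply each desired inequality by $q'_{\mu-1}>0$: the upper bound $t_1\le r_1$ reduces (using $r_1q'_{\mu-1}=s_0q_{\nu-1}$) to $s_0(q_{\nu-2}-q_{\nu-1})\le s_1q'_{\mu-2}$, true since the left side is $\le0$ and the right side is $\ge0$, while the lower bound $t_1\ge-s_1$ reduces to $s_0q_{\nu-2}\ge s_1(q'_{\mu-2}-q'_{\mu-1})$, true for the same reason with the roles reversed. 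I expect this pair of inequalities to be the only genuinely non-formal step, since it rests on the monotonicity and non-negativity of the convergent denominators rather than on matrix identities; the care needed is in the boundary cases $q_{-1}=q'_{-1}=0$, $q_0=q'_0=1$ (i.e.\ small $\nu,\mu$).

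Finally, uniqueness of $(r_1,t_1,s_1)$ is automatic because $M_1$ is determined by $M_0P_\nu=P'_\mu M_1$ and $P'_\mu$ is invertible, and the concluding continued fraction identity falls out of the action: since $\alpha=P_\nu\cdot\alpha_\nu$, we get $\beta=M_0\cdot\alpha=(P'_\mu M_1)\cdot\alpha_\nu=P'_\mu\cdot\beta_\mu$ with $\beta_\mu=(r_1\alpha_\nu+t_1)/s_1$, and $P'_\mu\cdot\beta_\mu$ is by definition $[b_0;\dots,b_{\mu-1},\beta_\mu]$.
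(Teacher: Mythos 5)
Your proposal is correct, but there is nothing in the paper to compare it against: the paper states this lemma \emph{without proof}, quoting it from Komatsu \cite{K07, K08} (who attributes it to Hurwitz and Ch\^atelet), and uses it only as a framework for the matrix identities discussed afterwards. So your argument stands alone, and it checks out. Defining $M_1=(P'_\mu)^{-1}M_0P_\nu$ makes integrality automatic from $\det P'_\mu=(-1)^\mu$; the lower-left entry $(-1)^{\mu}\bigl[p'_{\mu-1}s_0q_{\nu-1}-q'_{\mu-1}(r_0p_{\nu-1}+t_0q_{\nu-1})\bigr]$ vanishes exactly because of the defining equality $p'_{\mu-1}/q'_{\mu-1}=(r_0p_{\nu-1}+t_0q_{\nu-1})/(s_0q_{\nu-1})$; the determinant count $r_1s_1=(-1)^{\mu+\nu}N$ isolates precisely the role of the parity condition $\mu\equiv\nu\pmod 2$; and the bounds $-s_1\le t_1\le r_1$ do follow from the bottom-row equations $s_0q_{\nu-1}=r_1q'_{\mu-1}$ and $s_0q_{\nu-2}=t_1q'_{\mu-1}+s_1q'_{\mu-2}$ together with $0\le q_{\nu-2}\le q_{\nu-1}$ and $0\le q'_{\mu-2}\le q'_{\mu-1}$, inequalities that remain valid in the boundary cases $q_{-1}=q'_{-1}=0$ you flag, so no extra care is in fact needed there. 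Two points worth making explicit in a final write-up: the desired parity of $\mu$ is always available because every rational (integers included, via $[b_0]=[b_0-1;1]$) has exactly two regular expansions whose lengths differ by one; and the monotonicity of the $q'_i$ uses $b_i\ge 1$ for $i\ge 1$, which holds for either expansion of a rational. The closing identity $\beta=[b_0;b_1,\dots,b_{\mu-1},\beta_\mu]$ is, as you say, just the M\"obius action of $P'_\mu$ applied to $\beta_\mu=M_1\cdot\alpha_\nu$, using $\alpha=P_\nu\cdot\alpha_\nu$; this is the same mechanism (correspondence between matrix products and continued fractions) that the paper exploits, without proof, in the displayed matrix identities following the lemma.
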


For ease of notation in what follows, let
\[
A_i:=\left (
\begin{matrix}
a_i & 1\\
1&0
\end{matrix}
\right ), \hspace{25pt} B_i:=\left (
\begin{matrix}
b_i & 1\\
1&0
\end{matrix}
\right ), \hspace{25pt} R_i:=\left (
\begin{matrix}
r_i & t_i\\
0&s_i
\end{matrix}
\right ).
\]

The above lemma implies that if there exist integers $j$, $k$ and
$m$, and sets of integers $r_i$, $s_i$ and $t_i$, $0 \leq i \leq
m-1$, such that for all integers $n\geq 0$,
\begin{align*}
R_0A_{nj}A_{nj+1} \dots A_{nj+i_1-1}&=B_{nk}B_{nk+1}\dots
B_{nk+j_1-1}R_1,\\
R_1A_{nj+i_1}A_{nj+i_1+1} \dots
A_{nj+i_2-1}&=B_{nk+j_1}B_{nk+j_1+1}\dots
B_{nk+j_2-1}R_2,\\
R_2A_{nj+i_2}A_{nj+i_2+1} \dots
A_{nj+i_3-1}&=B_{nk+j_2}B_{nk+j_2+1}\dots
B_{nk+j_3-1}R_3,\\
&\,\,\vdots \\
R_{m-2}A_{nj+i_{m-2}} \dots
A_{nj+i_{m-1}-1}&=B_{nk+j_{m-2}}\dots B_{nk+j_{m-1}-1}R_{m-1}\\
R_{m-1}A_{nj+i_{m-1}} \dots A_{nj+j-1}&=B_{nk+j_{m-1}}\dots
B_{nk+k-1}R_{0},
\end{align*}
then
\[
\frac{r_0 \alpha +t_0}{s_0}=[b_0;b_1,b_2,b_3,\dots].
\]

More concisely, it implies that if there exist integers $j$ and $k$
such that for all integers $n\geq 0$,
\[
R_0 A_{nj}A_{nj+1}\dots A_{nj+j-1}=B_{nk}B_{nk+1}\dots
B_{nk+k-1}R_0,
\]
then
\[
\frac{r_0 \alpha +t_0}{s_0}=[b_0;b_1,b_2,b_3,\dots].
\]

Theorem 1 in \cite{K07} is a consequence of the fact that
\begin{multline*}
\left (
\begin{matrix}
v & -1\\
0&v
\end{matrix}
\right ) \left (
\begin{matrix}
a_{2j} & 1\\
1&0
\end{matrix}
\right ) \left (
\begin{matrix}
a_{2j+1} & 1\\
1&0
\end{matrix}
\right ) =\\
\left (
\begin{matrix}
a_{2j}-1 & 1\\
1&0
\end{matrix}
\right ) \left (
\begin{matrix}
1 & 1\\
1&0
\end{matrix}
\right ) \left (
\begin{matrix}
v-1 & 1\\
1&0
\end{matrix}
\right )\\
\times \left (
\begin{matrix}
\displaystyle{\frac{a_{2j+1}}{v^2}}-1 & 1\\
1&0
\end{matrix}
\right ) \left (
\begin{matrix}
1 & 1\\
1&0
\end{matrix}
\right ) \left (
\begin{matrix}
v-1 & 1\\
1&0
\end{matrix}
\right )
\left (
\begin{matrix}
v & -1\\
0&v
\end{matrix}
\right ).
\end{multline*}

Similarly, Theorem 3 in \cite{K07} essentially follows from the fact
that
\begin{multline*}
\left (
\begin{matrix}
1 & v-v^2\\
0&v^2
\end{matrix}
\right ) \left (
\begin{matrix}
a_{2j} & 1\\
1&0
\end{matrix}
\right ) \left (
\begin{matrix}
a_{2j+1} & 1\\
1&0
\end{matrix}
\right ) =\\
\left (
\begin{matrix}
\displaystyle{\frac{a_{2j}}{v^2}}-1 & 1\\
1&0
\end{matrix}
\right ) \left (
\begin{matrix}
v-1 & 1\\
1&0
\end{matrix}
\right ) \left (
\begin{matrix}
1 & 1\\
1&0
\end{matrix}
\right ) \\
\times \left (
\begin{matrix}
a_{2j+1}-1 & 1\\
1&0
\end{matrix}
\right ) \left (
\begin{matrix}
v-1 & 1\\
1&0
\end{matrix}
\right ) \left (
\begin{matrix}
1 & 1\\
1&0
\end{matrix}
\right )
 \left (
\begin{matrix}
1 & v-v^2\\
0&v^2
\end{matrix}
\right ),
\end{multline*}
and Theorems 7-10 in \cite{K07} essentially follow from the fact
that
\begin{equation*}
\left (
\begin{matrix}
1 & -1\\
0&v
\end{matrix}
\right ) \left (
\begin{matrix}
a_{j} & 1\\
1&0
\end{matrix}
\right )= \left (
\begin{matrix}
\displaystyle{\frac{a_{j}}{v}}-1 & 1\\
1&0
\end{matrix}
\right ) \left (
\begin{matrix}
1 & 1\\
1&0
\end{matrix}
\right ) \left (
\begin{matrix}
v-1 & 1\\
1&0
\end{matrix}
\right ) \left (
\begin{matrix}
1 & -1\\
0&v
\end{matrix}
\right ),
\end{equation*}
after setting $v=2$. Theorem 1 in \cite{K08} follows from the fact
that
\begin{align*}
&\left (
\begin{matrix}
v & -l\\
0&v
\end{matrix}
\right ) \left (
\begin{matrix}
a_{2j} & 1\\
1&0
\end{matrix}
\right ) \left (
\begin{matrix}
a_{2j+1} & 1\\
1&0
\end{matrix}
\right )  \\
&\\ &=\left (
\begin{matrix}
\displaystyle{a_{2j}-1} & 1\\
1&0
\end{matrix}
\right )
\left (
\begin{matrix}
1 & 1\\
1&0
\end{matrix}
\right )
 \left (
\begin{matrix}
\displaystyle{\frac{v-l-1}{l}} & 1\\
1&0
\end{matrix}
\right )
\left (
\begin{matrix}
\displaystyle{l-1} & 1\\
1&0
\end{matrix}
\right )\\
&
 \times \left (
\begin{matrix}
1 & 1\\
1&0
\end{matrix}
\right ) \left (
\begin{matrix}
\displaystyle{\frac{a_{2j+1}}{v^2}}-1 & 1\\
1&0
\end{matrix}
\right ) \left (
\begin{matrix}
\displaystyle{l} & 1\\
1&0
\end{matrix}
\right ) \left (
\begin{matrix}
\displaystyle{\frac{v-1}{l}} & 1\\
1&0
\end{matrix}
\right )
 \left (
\begin{matrix}
v & -l\\
0&v
\end{matrix}
\right )\\
&\\ &=\left (
\begin{matrix}
\displaystyle{a_{2j}-1} & 1\\
1&0
\end{matrix}
\right ) \left (
\begin{matrix}
1 & 1\\
1&0
\end{matrix}
\right )
 \left (
\begin{matrix}
\displaystyle{\frac{v-2l+1}{l}} & 1\\
1&0
\end{matrix}
\right ) \left (
\begin{matrix}
1 & 1\\
1&0
\end{matrix}
\right )
 \left (
\begin{matrix}
\displaystyle{l-1} & 1\\
1&0
\end{matrix}
\right )\\
&
 \times  \left (
\begin{matrix}
\displaystyle{\frac{a_{2j+1}}{v^2}}-1 & 1\\
1&0
\end{matrix}
\right ) \left (
\begin{matrix}
1 & 1\\
1&0
\end{matrix}
\right )\left (
\begin{matrix}
\displaystyle{l-2} & 1\\
1&0
\end{matrix}
\right ) \left (
\begin{matrix}
1 & 1\\
1&0
\end{matrix}
\right ) \left (
\begin{matrix}
\displaystyle{\frac{v-l+1}{l}} & 1\\
1&0
\end{matrix}
\right )
 \left (
\begin{matrix}
v & -l\\
0&v
\end{matrix}
\right ).
\end{align*}
Many other Hurwitzian and Tasoevian continued fraction expansions,
including several of the results in \cite{K08}, may be derived
 from existing continued fraction expansions using the matrix
 identities above, and other similar such matrix identities.

 In the present paper, the continued fraction identity at
 \eqref{ocf2} maybe regarded as following from a special case ($m=p^2$, $a_0=0$) of the matrix identity
\begin{multline*}
\left (
\begin{matrix}
m &  p\\
0& p^2
\end{matrix}
\right ) \left (
\begin{matrix}
a_{2j} & 1\\
1&0
\end{matrix}
\right ) \left (
\begin{matrix}
a_{2j+1} & 1\\
1&0
\end{matrix}
\right )  \\
=\left (
\begin{matrix}
\displaystyle{\frac{m a_{2j}}{ p^2}} & 1\\
1&0
\end{matrix}
\right ) \left (
\begin{matrix}
p & 1\\
1&0
\end{matrix}
\right )
 \left (
\begin{matrix}
\displaystyle{\frac{-a_{2j+1}}{ m}} & 1\\
1&0
\end{matrix}
\right ) \left (
\begin{matrix}
-p & 1\\
1&0
\end{matrix}
\right ) \left (
\begin{matrix}
m &  p\\
0& p^2
\end{matrix}
\right ).
\end{multline*}
However we keep the existing proof to manifest the variety of ways
of deriving these continued fraction identities.

We do not consider the kind of matrix identities exhibited above
further in the present paper, although it should be obvious that
similar matrix identities will give rise to many other families of
Hurwitzian and Tasoevian continued fractions.

\section{Hurwitzian Continued Fractions}

We first recall some of the well-known classes of Hurwitzian
continued fractions and consider some elementary generalizations of
them. We first note that Lehmer's continued fraction \eqref{lehmer1}
\begin{equation*}
[0;a,a+b,a+2b,a+3b,\cdots ]=
\frac{I_{(a/b)}(2/b)}{I_{(a/b)-1}(2/b)},
\end{equation*}
can easily be generalized. Replace $a$ by $a\sqrt{uv}$ and $b$ by
$b\sqrt{uv}$, multiply both sides of \eqref{lehmer1} by
$\sqrt{v/u}$, apply a sequence of similarity transformations to the
resulting continued fraction to make it regular once more, and we
get
\begin{equation}\label{lehmer2}
[0;ua,v(a+b),u(a+2b),v(a+3b),\cdots ]=
\sqrt{\frac{v}{u}}\frac{I_{(a/b)}(2/b\sqrt{uv})}{I_{(a/b)-1}(2/b\sqrt{uv})}.
\end{equation}

Komatsu  also derives this generalization in \cite{K03b}, but his
derivation is more complicated. We note that several of the
well-known classes of Hurwitzian continued fractions follow as
special cases of \eqref{lehmer2}. For example, before removing the
negative partial quotients, Lambert's continued fraction
\eqref{lambert} gives that
\begin{equation}\label{tan2}
\sqrt{\frac{v}{u}}\tan \frac{1}{\sqrt{uv}} = [0,u,-3v,5u,-7v,
\dots],
\end{equation}
which follows upon setting $a=1$, $b=2$ and replacing $v$ with $-v$.
The continued fraction
\begin{equation}\label{tanh2}
\sqrt{\frac{v}{u}}\tanh \frac{1}{\sqrt{uv}} = [0,u,3v,5u,7v, \dots],
\end{equation}
is clearly also a special case. The continued fraction
\[
[0;\overline{(4n+2)s}\,]_{n=0}^{\infty} =
\frac{e^{1/s}-1}{e^{1/s}+1}
\]
is clearly a special case of \eqref{lehmer1}. Thus, as Komatsu
indicated in \cite{K06a}, the continued fraction at \eqref{lehmer2}
may be used to generalize several of the well-known Hurwitzian
continued fraction expansions. As in Corollary \ref{cneg}, further
variations follow upon replacing some of the parameters by their
negatives.

We also recall a well-known continued fraction expansion for $e^z$,
$z \in \mathbb{C}$ (see \cite[page 563]{LW92}, for example):
\begin{equation}\label{eeqz}
e^{z}=\frac{1}{1}\-\frac{z}{1}\+\frac{z}{2}\-\frac{z}{3}
\+\frac{z}{2}\-\frac{z}{5}\+\frac{z}{2}\-\frac{z}{7}\+\cds.
\end{equation}
Set $z=1/m^2$ and apply a sequence of similarity transformations to
the resulting continued fraction  some to get that
\begin{align}\label{eeqm}
m(1-e^{-1/m^2}) &=
[0;\overline{(4n+1)m,2m,-(4n+3)m,-2m}\,]_{n=0}^{\infty}\\
&=[0;m,\overline{2m-1,1,(2n+1)m-1}\,]_{n=1}^{\infty}.\notag
\end{align}
If we set $m=\sqrt{uv}$ and multiply the left side of \eqref{eeqm}
and the first continued fraction on the right side of \eqref{eeqm}
by $\sqrt{v/u}$, we get
\begin{align}\label{eeqm2}
v(1-e^{-1/uv}) &=
[0;\overline{(4n+1)u,2v,-(4n+3)u,-2v}\,]_{n=0}^{\infty}\\
&=[0;u,\overline{2v-1,1,(2n+1)u-1}\,]_{n=1}^{\infty}.\notag
\end{align}
We have not seen the continued fraction expansions at \eqref{eeqm}
and \eqref{eeqm2} elsewhere.

We are now ready to derive several new families of Hurwitzian
continued fractions, using Corollary \ref{corfl}.

\begin{theorem}\label{thp} Let $a$, $b$, $p$, $u$ and $v$ be  integers
restricted in the case of each continued fraction below so that the
partial quotients are all positive. Then
\begin{multline}\label{lehmer2p}
[0;\overline{p-1,1,u(a+2nb)-1,p-1,1,v(a+(2n+1)b)-1
}\,\,]_{n=0}^{\infty}\\
=\frac{1}{p}+\frac{1}{p}
\sqrt{\frac{v}{u}}\frac{I_{(a/b)}(2/bp\sqrt{uv})}{I_{(a/b)-1}(2/bp\sqrt{uv})},
\end{multline}
\begin{multline}\label{tan2p}
[0;p-1,\overline{1,(4n+1)u-1,p,(4n+3)v-1,1,p-2
}\,\,]_{n=0}^{\infty}\\
=\frac{1}{p}+\frac{1}{p}\sqrt{\frac{v}{u}}\tan \frac{1}{p\sqrt{uv}},
\end{multline}
\begin{multline}\label{tanh2p}
[0;\overline{p-1,1,(4n+1)u-1,p-1,1,(4n+3)v-1
}\,\,]_{n=0}^{\infty}\\
=\frac{1}{p}+\frac{1}{p}\sqrt{\frac{v}{u}}\tanh
\frac{1}{p\sqrt{uv}},
\end{multline}
\begin{multline}\label{e2p}
[0;p-1,\overline{1,(4n+1)u-1,p-1,1,2v-1,p,(4n+3)u-1,}\\
\phantom{asdasdasdasdasdadasdasdasdasdasdad}\overline{1,p-1,2v-1,1,p-2 }\,\,]_{n=0}^{\infty}\\
=\frac{1}{p}+v(1-e^{-1/up^2v}).
\end{multline}
\end{theorem}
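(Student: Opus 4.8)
The plan is to prove all four identities by the same device used for Theorems~\ref{t1ex} and \ref{T3ex}: feed a \emph{known} Hurwitzian expansion into Corollary~\ref{corfl} as the continued fraction $[0;a_1,a_2,a_3,\dots]$ on the right of \eqref{ocf2}, and then clear the negative partial quotients from the continued fraction this produces on the left of \eqref{ocf2}. I would take \eqref{lehmer2p} as the master identity and obtain \eqref{tanh2p} for free as its specialization $a=1$, $b=2$, since then the odd‑position terms $u(a+2nb)$ collapse to $(4n+1)u$ and the even‑position terms $v(a+(2n+1)b)$ collapse to $(4n+3)v$, turning the quasi‑period of \eqref{lehmer2p} into that of \eqref{tanh2p}.

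The first step is a rescaling so that the right‑hand side of Corollary~\ref{corfl} matches the target value. For \eqref{lehmer2p} the source expansion is \eqref{lehmer2}; replacing $u$ by $p^{2}u$ there multiplies every odd‑position partial quotient (exactly the ones carrying the factor $u$) by $p^{2}$ while fixing the even‑position ones, and simultaneously sends the value to $\tfrac1p\sqrt{v/u}\,I_{(a/b)}(2/bp\sqrt{uv})/I_{(a/b)-1}(2/bp\sqrt{uv})$. The factor $p^{2}$ is precisely what makes $-a_{2n-1}/p^{2}$ an integer, namely $-u(a+2(n-1)b)$, so that taking the rescaled expansion to be $[0;a_1,a_2,a_3,\dots]$ and invoking Corollary~\ref{corfl} yields, after shifting the index so that $n$ starts at $0$,
\[
\left[0;\overline{p,\,-u(a+2nb),\,-p,\,v(a+(2n+1)b)}\,\right]_{n=0}^{\infty}=\frac1p+\frac1p\sqrt{\frac vu}\,\frac{I_{(a/b)}(2/bp\sqrt{uv})}{I_{(a/b)-1}(2/bp\sqrt{uv})}.
\]
Before using the corollary I would verify its convergence hypothesis by applying Worpitzky's theorem to the tail of \eqref{ocf}: under the stated positivity restrictions the partial numerators $p/a_{2n-1}$ and $1/(pa_{2n})$ are eventually bounded by $1/4$ in modulus, so \eqref{ocf} converges and the corollary applies.

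The remaining step is to convert the left‑hand side above into a regular continued fraction by removing its negative partial quotients with the identities $[m,-n,\alpha]=[m-1,1,n-1,-\alpha]$ and $[m,n,0,p,\alpha]=[m,n+p,\alpha]$ recalled before Corollary~\ref{cneg}; this is exactly the bookkeeping that produces the block $p-1,\,1,\,u(a+2nb)-1,\,p-1,\,1,\,v(a+(2n+1)b)-1$ and hence \eqref{lehmer2p}, the positivity hypotheses ensuring every such entry remains a positive integer. For \eqref{tan2p} and \eqref{e2p} I would run the identical three steps starting instead from \eqref{tan2} and \eqref{eeqm2}. The one complication is that these source expansions already carry negative partial quotients (the $-(4n+3)v$ in \eqref{tan2}, and the $-(4n+3)u,\,-2v$ in \eqref{eeqm2}), so after Corollary~\ref{corfl} there are more negatives sitting in more varied positions; clearing them lengthens the quasi‑period and throws off the isolated leading term $p-1$ outside the overline together with the irregular entries $p$ and $p-2$ seen in \eqref{tan2p} and \eqref{e2p}. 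This negative‑clearing computation for \eqref{tan2p} and \eqref{e2p} is the main obstacle: it is entirely routine but must be carried out carefully, since the precise quasi‑period and boundary terms depend delicately on the order in which adjacent negative quotients are eliminated.
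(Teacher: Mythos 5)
Your proposal is correct and follows essentially the same route as the paper: apply Corollary \ref{corfl} to the known expansions \eqref{lehmer2}, \eqref{tan2}, \eqref{tanh2}, \eqref{eeqm2} with $u$ replaced by $up^{2}$, then clear the negative partial quotients using the identities recalled before Corollary \ref{cneg} (your only deviation, obtaining \eqref{tanh2p} as the specialization $a=1$, $b=2$ of \eqref{lehmer2p} rather than applying the corollary to \eqref{tanh2} directly, is immaterial since \eqref{tanh2} is itself that specialization of \eqref{lehmer2}). Your explicit Worpitzky check of convergence is a point the paper leaves implicit, and is a welcome addition.
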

\begin{proof}
The claimed identities follow by applying the result in Corollary
\ref{corfl} to, in turn, \eqref{lehmer2}, \eqref{tan2},
\eqref{tanh2} and \eqref{eeqm2} (replace $u$ by $up^2$ in each
case), and then removing the negative signs from the resulting
continued fractions.
\end{proof}

Remark:  Variants of each of these continued fraction identities
could be produced by replacing some of the parameters in each
expansion in Theorem \ref{thp} by their negatives, as in Corollary
\ref{cneg}, but we do not consider that here.

\subsection{Finite continued fractions containing arithmetic
progressions} Here we find expressions for finite continued
fractions of the form $[0;a,a+b,a+2b,a+3b,\cdots , a+(n-1)b]$ and
$[0;a,c,a+b,c+d,a+2b,c+2d,\cdots , a+(n-1)b, c+(n-1)d]$, where $a$,
$b$, $c$ and $d$ satisfy a simple algebraic relation. We first prove
the following theorem.
\begin{theorem}\label{tfin}
Let
\begin{equation}\label{fincfeq}
\frac{P_n}{Q_n}:=\frac{-
c}{a}\-\frac{c}{a+b}\-\frac{c}{a+2b}\-\cds\-\frac{c}{a+(n-1)b}
\end{equation}
denote the $n$-th approximant of the continued fraction
$K_{j=0}^{\infty}-c/(a+jb)$. Then
\begin{align}\label{pnqneq}
P_n &=\sum_{i=1}^{\lfloor (n+1)/2 \rfloor}\binom{n-i}{i-1}(-c)^i
\prod_{j=i}^{n-i}(a+j\,b),\\
Q_n &=\sum_{i=0}^{\lfloor n/2 \rfloor}\binom{n-i}{i}(-c)^i
\prod_{j=i}^{n-1-i}(a+j\,b).\notag
\end{align}
\end{theorem}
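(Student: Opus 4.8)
The plan is to prove the formulas for $P_n$ and $Q_n$ by induction on $n$, using the standard three-term recurrence for canonical numerators and denominators of a continued fraction. Recall that for the continued fraction $K_{j=0}^{\infty}-c/(a+jb)$, writing $a_j = -c$ (for $j\geq 1$) and $b_j = a+(j-1)b$, the approximants $P_n/Q_n$ satisfy
\[
P_n = (a+(n-1)b)\,P_{n-1} - c\,P_{n-2},\qquad
Q_n = (a+(n-1)b)\,Q_{n-1} - c\,Q_{n-2},
\]
with the usual initial conditions. Since $P_n$ and $Q_n$ obey the \emph{same} recurrence (differing only in their initial data), it suffices to establish the claimed closed form for the general solution and then match initial conditions; I would carry out the induction for $Q_n$ in detail and treat $P_n$ analogously.

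The heart of the argument is to verify that the proposed right-hand side satisfies the recurrence. So first I would substitute the candidate expression for $Q_n$ into $(a+(n-1)b)Q_{n-1} - c\,Q_{n-2}$ and show the result collapses to the candidate for $Q_n$. The term $(a+(n-1)b)Q_{n-1}$ contributes, for each $i$, a summand $\binom{n-1-i}{i}(-c)^i\prod_{j=i}^{n-1-i}(a+jb)$, where the factor $(a+(n-1)b)$ has extended the product's upper limit from $n-2-i$ to $n-1-i$. The term $-c\,Q_{n-2}$ contributes, after re-indexing $i\mapsto i-1$, a summand $\binom{n-1-i}{i-1}(-c)^i\prod_{j=i-1}^{n-1-i}(a+jb)$. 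The key observation is that the \emph{product} ranges must be reconciled: in the second contribution the product runs from $j=i-1$, which is one index lower than in the target $\prod_{j=i}^{n-1-i}$. I would factor out the common product $\prod_{j=i}^{n-1-i}(a+jb)$ from both contributions, leaving a scalar combination whose coefficient is
\[
\binom{n-1-i}{i} + (a+(i-1)b)\,\frac{-c\cdot(\text{coefficient bookkeeping})}{\;}\,,
\]
and at this point the genuinely combinatorial step is to collapse the binomial coefficients via Pascal's rule $\binom{n-1-i}{i}+\binom{n-1-i}{i-1}=\binom{n-i}{i}$.

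\textbf{The main obstacle} will be handling the mismatched lower limits of the products: the factor $(a+(i-1)b)$ sitting at the bottom of the $-c\,Q_{n-2}$ product does not immediately cancel against anything in the $(a+(n-1)b)Q_{n-1}$ term, so a naive application of Pascal's rule does not go through directly. The resolution is to be careful about exactly which extra factors of $(a+jb)$ each term carries, and to reconcile the index ranges \emph{before} trying to combine binomial coefficients; I expect that once the products are aligned to the common range $\prod_{j=i}^{n-1-i}(a+jb)$, any leftover linear factors will cancel and the binomial identity will apply cleanly. I would also pay attention to the edge terms of the sums, namely $i=0$ and the top index near $\lfloor n/2\rfloor$, where the floor function in the summation limit interacts with the parity of $n$; these boundary cases must be checked separately to confirm that no spurious terms are created or lost when the summation ranges shift under the recurrence.

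Finally, after verifying the recurrence and the boundary terms, I would confirm the base cases $Q_0 = 1$, $Q_1 = a$ (and correspondingly $P_0 = 0$, $P_1 = -c$) agree with the formulas, which pins down the solution uniquely and completes the induction.
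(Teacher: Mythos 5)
Your overall strategy --- induction on $n$ via the three-term recurrence $Q_n=(a+(n-1)b)Q_{n-1}-cQ_{n-2}$, with base cases pinned down --- is exactly the paper's, and your base cases are correct. But the central step, as you describe it, would fail. First, a bookkeeping error: for $i\geq 1$ the factor $(a+(n-1)b)$ does \emph{not} extend $\prod_{j=i}^{n-2-i}(a+jb)$ to $\prod_{j=i}^{n-1-i}(a+jb)$; the factor needed to do that is $(a+(n-1-i)b)$, which is not what multiplies $Q_{n-1}$ (your identification is valid only for the $i=0$ term). Likewise the re-indexed contribution of $-c\,Q_{n-2}$ carries the product $\prod_{j=i-1}^{n-2-i}(a+jb)$, not $\prod_{j=i-1}^{n-1-i}(a+jb)$. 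Second, and more seriously, your expectation that ``any leftover linear factors will cancel and the binomial identity will apply cleanly'' is not what happens: nothing cancels. After factoring out the common product $\prod_{j=i}^{n-2-i}(a+jb)$, the coefficient of $(-c)^i$ is
\[
(a+(n-1)b)\binom{n-1-i}{i}+(a+(i-1)b)\binom{n-1-i}{i-1},
\]
and the whole point of the proof is that this equals $(a+(n-1-i)b)\binom{n-i}{i}$: the two ``out-of-range'' linear factors, weighted by their respective binomial coefficients, merge into precisely the missing top factor of the target product. This is a \emph{weighted} Pascal identity, not Pascal's rule alone: Pascal handles the $a$-parts, but for the $b$-parts one must separately verify
\[
(n-1)\binom{n-1-i}{i}+(i-1)\binom{n-1-i}{i-1}=(n-1-i)\binom{n-i}{i},
\]
which is exactly the computation the paper carries out (in the equivalent form $\frac{a+kb}{i-1}+\frac{a+(i-1)b}{k-2i+2}=\frac{(k-i+1)\,(a+(k-i+1)b)}{(i-1)(k-2i+2)}$). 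Without this identity stated and proved, the induction does not close.

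With that identity supplied, the rest of your outline is sound and matches the paper: one must also split on the parity of $n$ when tracking the top summation index (the paper treats $k$ odd by merging terms and $k$ even by noting the extra term of one sum supplies the new top term), and the $P_n$ case is handled by the same argument with shifted indices.
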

\begin{proof}
The statements are easily checked to be true for $n=1$ and $n=2$ (as
usual, the empty product is taken to be equal to 1). Now suppose the
statements are true for $n=1,2,\dots k$. {\allowdisplaybreaks
\begin{align}\label{pksm1}
P_{k+1}&=(a+kb)P_k-cP_{k-1}\notag\\
&=(a+kb)\sum_{i=1}^{\lfloor (k+1)/2 \rfloor}\binom{k-i}{i-1}(-c)^i
\prod_{j=i}^{k-i}(a+j\,b)\notag\\
&\phantom{asdasdasdasdasdasdasd}-c\sum_{i=1}^{\lfloor k/2
\rfloor}\binom{k-1-i}{i-1}(-c)^i \prod_{j=i}^{k-1-i}(a+j\,b)\notag\\
&=-c\prod_{j=1}^{k}(a+j\,b)+(a+kb)\sum_{i=2}^{\lfloor (k+1)/2
\rfloor}\binom{k-i}{i-1}(-c)^i
\prod_{j=i}^{k-i}(a+j\,b)\notag\\
&\phantom{asdasdasdasdasdasdasd}+\sum_{i=2}^{\lfloor k/2
\rfloor+1}\binom{k-i}{i-2}(-c)^i \prod_{j=i-1}^{k-i}(a+j\,b).
\end{align}
} If $k$ is odd, then $\lfloor (k+1)/2 \rfloor=\lfloor k/2
\rfloor+1=\lfloor (k+2)/2 \rfloor$ and {\allowdisplaybreaks
\begin{align*}
&(a+kb)\binom{k-i}{i-1}(-c)^i \prod_{j=i}^{k-i}(a+j\,b)+
\binom{k-i}{i-2}(-c)^i \prod_{j=i-1}^{k-i}(a+j\,b)\\
&=(-c)^i \prod_{j=i}^{k-i}(a+j\,b)\frac{(k-i)!}{(i-2)!(k-2i+1)!}
\left(\frac{a+k b}{i-1}+\frac{a+(i-1)b}{k-2i+2}\right)\\
&=(-c)^i \prod_{j=i}^{k-i}(a+j\,b)\frac{(k-i)!}{(i-2)!(k-2i+1)!}
\frac{(k-i+1)(a+(k-i+1)b)}{(i-1)(k-2i+2)}\\
&=\binom{k+1-i}{i-1}(-c)^i \prod_{j=i}^{k+1-i}(a+j\,b), \\
&\Longrightarrow P_{k+1}=\sum_{i=1}^{\lfloor (k+2)/2
\rfloor}\binom{k+1-i}{i-1}(-c)^i \prod_{j=i}^{k+1-i}(a+j\,b).
\end{align*}
} If $k$ is even, the extra $\lfloor k/2 \rfloor+1$-th term at
\eqref{pksm1}  provides the $\lfloor (k+2)/2 \rfloor$-th term in the
sum above. The proof of \eqref{pnqneq} for $P_n$ now follows.

The proof for $Q_n$ is virtually identical, and so is omitted.
\end{proof}

\begin{corollary}
Let $a$ and $b$ be positive integers. Then
\begin{equation}\label{finap1}
\frac{1}{a}\+\frac{1}{a+b}\+\cds\+\frac{1}{a+(n-1)b} =
\frac{\displaystyle{\sum_{i=1}^{\lfloor (n+1)/2
\rfloor}\binom{n-i}{i-1}
\prod_{j=i}^{n-i}(a+j\,b)}}{\displaystyle{\sum_{i=0}^{\lfloor n/2
\rfloor}\binom{n-i}{i} \prod_{j=i}^{n-1-i}(a+j\,b)}}.
\end{equation}
Let $f$, $g$, $h$ and $k$ be integers such that $2 g h = k(2 f+h)$.
Then
\begin{multline}\label{finap2}
\frac{1}{f}\+\frac{1}{g}\+\frac{1}{f+h}\+\frac{1}{g+k}\+\cds\+\frac{1}{f+(n-1)h}
\+\frac{1}{g+(n-1)k} \\=
\frac{\displaystyle{\sum_{i=1}^{n}\binom{2n-i}{i-1}{\left(
\frac{2g}{2\,f + h} \right) }^{2n-i} \,\prod_{j=i}^{2n-i}\left( f
+j\, \frac{h}{2} \right)
}}{\displaystyle{\sum_{i=0}^{n}\binom{2n-i}{i}{\left( \frac{2g}{2\,f
+ h} \right) }^{2n-1-i}\, \prod_{j=i}^{2n-1-i}\left( f +j\,
\frac{h}{2} \right)}}.
\end{multline}
\end{corollary}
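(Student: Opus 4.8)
The plan is to derive both identities in the corollary as direct specializations of Theorem~\ref{tfin}, which already provides closed forms for the numerator $P_n$ and denominator $Q_n$ of the approximants of $K_{j=0}^{\infty}-c/(a+jb)$. The only obstacle is that Theorem~\ref{tfin} expresses a continued fraction with \emph{minus} signs (subtractions) between the partial fractions, whereas \eqref{finap1} and \eqref{finap2} are regular continued fractions with \emph{plus} signs. So the core of the work is a sign-change bookkeeping argument followed by a specialization of the parameter $c$.

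For \eqref{finap1}, I would first observe that setting $c=-1$ in \eqref{fincfeq} turns the continued fraction $K_{j=0}^{\infty}-c/(a+jb)$ into the all-plus continued fraction on the left of \eqref{finap1}, namely $\frac{1}{a}\+\frac{1}{a+b}\+\cds\+\frac{1}{a+(n-1)b}$. Substituting $c=-1$ into the formulas \eqref{pnqneq} for $P_n$ and $Q_n$ replaces each factor $(-c)^i$ by $1^i=1$, and the resulting ratio $P_n/Q_n$ is exactly the right-hand side of \eqref{finap1}. This half is essentially immediate once the sign substitution is made explicit; the empty-product conventions already fixed in Theorem~\ref{tfin} take care of the boundary terms.

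For \eqref{finap2}, the strategy is to collapse the two interlaced arithmetic progressions into a single one so that Theorem~\ref{tfin} can be applied to a continued fraction of length $2n$. The algebraic hypothesis $2gh=k(2f+h)$ is precisely what is needed to make the two progressions $f,f+h,f+2h,\dots$ and $g,g+k,g+2k,\dots$ align under a single linear parametrization: writing the common ratio $2g/(2f+h)$ and the half-step $h/2$, one checks that the even-indexed partial quotients reproduce the $g$-progression and the odd-indexed ones reproduce the $f$-progression. Concretely, I would apply a sequence of equivalence (similarity) transformations to the regular continued fraction on the left of \eqref{finap2} to turn it into one of the form $K_{j=0}^{2n-1}-c/(a'+jb')$ with $a'=f$, $b'=h/2$, and $-c=(2g/(2f+h))^{2}$ (up to the precise constant forced by the transformation), so that Theorem~\ref{tfin} applies with length $2n$. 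Then \eqref{pnqneq} gives $P_{2n}$ and $Q_{2n}$ as sums over $i$ of binomial coefficients $\binom{2n-i}{i-1}$ and $\binom{2n-i}{i}$ times powers of $(-c)$ and products $\prod_{j}(f+j\,h/2)$, which match the right-hand side of \eqref{finap2} after substituting the value of $c$ dictated by the relation $2gh=k(2f+h)$.

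The main obstacle will be step for \eqref{finap2}: verifying that the particular similarity transformation converting the two interlaced progressions with plus signs into a single-progression continued fraction with minus signs produces \emph{exactly} the constant $(2g/(2f+h))$ raised to the appropriate power and the product $\prod(f+j\,h/2)$, with all exponents and index ranges matching. This is the point where the hypothesis $2gh=k(2f+h)$ must be used, and keeping track of how it forces the ratio $k/h$ in terms of $g$, $f$, $h$ is the delicate part; the rest is routine substitution into the already-established formulas of Theorem~\ref{tfin}. I would finish by confirming that the powers $(\cdot)^{2n-i}$ and $(\cdot)^{2n-1-i}$ in \eqref{finap2} are consistent with collecting the accumulated similarity-transformation factors, which is purely a matter of exponent arithmetic.
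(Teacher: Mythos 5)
You take essentially the same route as the paper. For \eqref{finap1} the two proofs coincide exactly: set $c=-1$ in Theorem~\ref{tfin}, so that every partial fraction $\-\frac{c}{\,\cdot\,}$ becomes $\+\frac{1}{\,\cdot\,}$ and every factor $(-c)^i$ becomes $1$. For \eqref{finap2} the paper also uses the equivalence-transformation idea, but runs it in the opposite direction and with different constants from the ones you guessed: it substitutes
\[
a=\frac{2fg}{2f+h},\qquad b=\frac{gh}{2f+h},\qquad c=-\frac{2g}{2f+h}
\]
into the length-$2n$ continued fraction of Theorem~\ref{tfin}; then $-a/c=f$, $a+b=g$, $a+jb=\frac{2g}{2f+h}\bigl(f+j\tfrac{h}{2}\bigr)$, and $2b=k$ (this last equality is exactly where the hypothesis $2gh=k(2f+h)$ enters), so the transformed fraction is precisely the left side of \eqref{finap2}; the powers of $\frac{2g}{2f+h}$ on the right side then arise from factoring that constant out of each of the $2n-2i+1$ (resp.\ $2n-2i$) factors in the products of $P_{2n}$ (resp.\ $Q_{2n}$), together with $(-c)^i$, after which a common factor $\frac{2g}{2f+h}$ cancels from $P_{2n}/Q_{2n}$. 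Your stated specialization is not correct as written: if you normalize so that $a'=f$ and $b'=h/2$, the similarity transformation forces the partial numerators to be $-c=\frac{2f+h}{2g}$, the \emph{reciprocal} of $\frac{2g}{2f+h}$, not its square; moreover in that normalization the first partial numerator remains $1$ rather than $-c$, so the left side of \eqref{finap2} equals $\frac{2g}{2f+h}$ times the continued fraction of Theorem~\ref{tfin}, and this extra factor must be carried through the final ratio. Since you explicitly deferred exactly this constant-tracking as the step to verify, this is a correctable slip inside an otherwise correct plan rather than a wrong approach: with either normalization (yours, corrected, or the paper's, which keeps all numerators equal from the start) the exponent bookkeeping closes and yields \eqref{finap2}.
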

\begin{proof}
The identity at \eqref{finap1} follows immediately, upon setting
$c=-1$ in Theorem \ref{tfin}. For \eqref{finap2}, it is easy to
see that {\allowdisplaybreaks
\begin{align*}
&\frac{-
c}{a}\-\frac{c}{a+b}\-\frac{c}{a+2b}\-\cds\-\frac{c}{a+(2n-1)b}
\\
&=\frac{1}{-a/c}\+\frac{1}{a+b}+\frac{1}{-a/c -
2b/c}+\frac{1}{a+3b}
\+\\
&\phantom{asdasdasdasdasdasa}\cds \+\frac{1}{-a/c -
(2n-2)b/c}\+\frac{1}{a+(2n-1)b}.
\end{align*}
} Now make the substitutions
\[
a=\frac{2\,f\,g}{2\,f + h}, \hspace{20pt}b=\frac{g\,h}{2\,f +
h},\hspace{20pt}c=-\frac{2 g}{2 f+h},
\]
and the continued fraction at \eqref{finap2} is produced. The result
follows, after some simple manipulations, upon making the same
substitutions into the ratio $P_{2n}/Q_{2n}$, where $P_{2n}$ and
$Q_{2n}$ are as defined at \eqref{pnqneq}.
\end{proof}
Lehmer's result \eqref{lehmer1} easily follows from \eqref{finap1},
upon re-indexing the numerator on the right side by replacing $i$
with $i+1$, dividing top and bottom on the right side by
$\prod_{j=0}^{n-1}(a+j\,b)$, performing some simple algebraic
manipulations, and then letting $n \to \infty$.
\begin{corollary}$($Lehmer \cite{L73}$)$
Let $a$ and $b$ be positive integers. Then
\[
[0;a,a+b,a+2b,a+3b,\cdots ]=\frac{1}{b} \frac{\displaystyle{
\sum_{k=0}^{\infty}\frac{b^{-2k}}{(a/b)_{k+1}k!}}}
{\displaystyle{\sum_{k=0}^{\infty}\frac{b^{-2k}}{(a/b)_{k}k!}}}.
\]
\end{corollary}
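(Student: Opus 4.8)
The plan is to pass to the limit $n\to\infty$ in the finite evaluation \eqref{finap1}, whose left side is exactly the $n$-th approximant of the regular continued fraction $[0;a,a+b,a+2b,\dots]$, so that its value is the limit of the right side of \eqref{finap1} (the convergence of Lehmer's continued fraction being classical). Writing $\alpha=a/b$ and factoring $b$ out of each factor $(a+jb)=b(\alpha+j)$, every product $\prod_{j=i}^{n-1-i}(a+jb)$ becomes $b^{\,n-2i}\prod_{j=i}^{n-1-i}(\alpha+j)$, which lets me divide the common factor $\prod_{j=0}^{n-1}(a+jb)=b^{\,n}(\alpha)_n$ out of both numerator and denominator before taking the limit.

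First I would handle the denominator. Dividing the $i$-th summand of $\sum_{i=0}^{\lfloor n/2\rfloor}\binom{n-i}{i}\prod_{j=i}^{n-1-i}(a+jb)$ by $b^{\,n}(\alpha)_n$ and cancelling the interior factors leaves
\[
\binom{n-i}{i}\,b^{-2i}\,\frac{1}{(\alpha)_i\,(\alpha+n-i)_i},
\]
since the factors of $(\alpha)_n$ absent from $\prod_{j=i}^{n-1-i}(\alpha+j)$ are precisely the leading block $(\alpha)_i$ and the trailing block $(\alpha+n-i)_i$. As $(\alpha+n-i)_i$ is monic of degree $i$ in $n$ with leading term $n^i$, while $\binom{n-i}{i}\sim n^i/i!$, the quotient $\binom{n-i}{i}/(\alpha+n-i)_i\to 1/i!$, so the $i$-th term tends to $b^{-2i}/[(\alpha)_i\,i!]$ and the denominator tends to $\sum_{k=0}^{\infty}b^{-2k}/[(\alpha)_k\,k!]$.

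For the numerator I would first re-index by $i\mapsto i+1$, turning $\sum_{i=1}^{\lfloor(n+1)/2\rfloor}\binom{n-i}{i-1}\prod_{j=i}^{n-i}(a+jb)$ into $\sum_{i=0}^{\lfloor(n-1)/2\rfloor}\binom{n-i-1}{i}\prod_{j=i+1}^{n-i-1}(a+jb)$. Dividing the $i$-th summand by $b^{\,n}(\alpha)_n$ and cancelling as above gives $\binom{n-i-1}{i}\,b^{-2i-1}/[(\alpha)_{i+1}\,(\alpha+n-i)_i]$, whose limit is $b^{-2i-1}/[(\alpha)_{i+1}\,i!]$; hence the numerator tends to $b^{-1}\sum_{k=0}^{\infty}b^{-2k}/[(\alpha)_{k+1}\,k!]$. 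Forming the ratio and pulling the factor $1/b$ to the front reproduces the claimed expression.

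The one genuine subtlety, and the step I would be most careful about, is justifying that $\lim_{n\to\infty}$ may be moved inside the summation over $i$ in each series. Here I would record the uniform bound $\binom{n-i}{i}/(\alpha+n-i)_i\le 1/i!$, valid for every $n$ because $(\alpha+n-i)_i\ge(n-i)^i\ge\binom{n-i}{i}\,i!$ when $\alpha>0$ (and likewise $\binom{n-i-1}{i}/(\alpha+n-i)_i\le 1/i!$ for the numerator). This dominates the $i$-th divided summand by the summable majorant $b^{-2i}/[(\alpha)_i\,i!]$ (respectively $b^{-2i-1}/[(\alpha)_{i+1}\,i!]$), so dominated convergence legitimizes the term-by-term passage to the limit; the remaining Pochhammer bookkeeping is routine.
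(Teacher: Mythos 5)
Your proposal is correct and follows essentially the same route as the paper, which derives the corollary from \eqref{finap1} by re-indexing the numerator via $i\mapsto i+1$, dividing numerator and denominator by $\prod_{j=0}^{n-1}(a+jb)$, and letting $n\to\infty$. The paper compresses this into one sentence of ``simple algebraic manipulations''; your write-up merely supplies the details it omits, including the uniform bound $\binom{n-i}{i}/(\alpha+n-i)_i\le 1/i!$ that justifies the term-by-term passage to the limit.
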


\section{Hurwitzian- and Tasoevian continued fractions with arbitrarily long quasi-period}

We conclude by noting that the construction described in Corollary
\ref{corfl} can be iterated to  produce both Hurwitzian- and
Tasoevian continued fractions with arbitrary long quasi-period, with
arbitrarily many free parameters and whose limits can be determined.
We give one example, with seven free parameters and quasi-period of
length 24, to illustrate this.
\begin{theorem}\label{longhtcfs}
Let $e$, $f$, $p>1$, $q>1$, $r>2$, $u>1$ and $v>1$ be positive
integers. Let $E=ep^2q^4r^8$. Then {\allowdisplaybreaks
\begin{multline}\label{lcfeq}
[0; \overline{ r-1,1,q-1,r,p-1,1,
 r-1,q-1,1,r-1,eu^n-1,1,}\\
\phantom{asdasdddf}\overline{r-2,1,q-1,r-1,1,p-1,
 r,q-1,1,r-2,1,fv^n-1}\,]_{n=1}^{\infty}\\
 =\frac{1}{p q^2 r^4}+\frac{1}{q r^2} +\frac{1}{r}
 \phantom{asdasdadasdaddasdasdadasdasasdasdasddsdasdddf}\\
 +\left(\frac{1}{E u} -\frac{1}{E^2 f u^2 v  +E}\right)
\frac{\sum_{n=0}^{\infty}\displaystyle{ \frac{(E f)^{-n}
(uv)^{-n(n+3)/2}}{(1/uv;1/uv)_{n}(-1/Efu^3 v^2;1/uv)_{n}} }}
{\sum_{n=0}^{\infty}\displaystyle{ \frac{(E f)^{-n}
(uv)^{-n(n+1)/2}}{(1/uv;1/uv)_{n}(-1/Efu^2 v;1/uv)_{n}} }  }.
\end{multline}
}
\end{theorem}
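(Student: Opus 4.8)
The plan is to realize \eqref{lcfeq} as the end product of iterating the construction in Corollary \ref{corfl} three times, starting from the Tasoevian continued fraction $[0;\overline{eu^n,fv^n}]_{n=1}^{\infty}$ whose value is given by Theorem \ref{T3}. The governing principle is the matrix identity noted at the very end of Section 2: the special case $m=p^2$, $a_0=0$ of
\begin{equation*}
\left(\begin{matrix} m & p\\ 0 & p^2\end{matrix}\right)A_{2j}A_{2j+1}
=\left(\begin{matrix} ma_{2j}/p^2 & 1\\ 1 & 0\end{matrix}\right)
\left(\begin{matrix} p & 1\\ 1 & 0\end{matrix}\right)
\left(\begin{matrix} -a_{2j+1}/m & 1\\ 1 & 0\end{matrix}\right)
\left(\begin{matrix} -p & 1\\ 1 & 0\end{matrix}\right)
\left(\begin{matrix} m & p\\ 0 & p^2\end{matrix}\right)
\end{equation*}
encodes exactly the passage $[0;\ldots]\mapsto \tfrac1p+[0;a_1,a_2,\ldots]$ of \eqref{ocf2}. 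Each application inserts a fresh parameter (here $p$, then $q$, then $r$), prepends one more additive constant of the form $1/p$ to the limit, and doubles the quasi-period by splicing short fixed blocks between the existing partial quotients.

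First I would start from Theorem \ref{T3}, which gives $[0;\overline{eu^n,fv^n}]_{n=1}^{\infty}$ as the displayed ratio of series with $e$ in the role of the leading coefficient. The three iterations will successively replace $e$ by $ep^2$, then by $ep^2q^2$ (applied after a further rescaling), and so on, so that the cumulative effect is to replace $e$ by $E=ep^2q^4r^8$ in the series; this is why the exponents $2,4,8$ appear and why the final series in \eqref{lcfeq} is literally the series of Theorem \ref{T3} with $e\mapsto E$. The constant terms $1/(pq^2r^4)+1/(qr^2)+1/r$ accumulate as the $1/p$-type summands produced at each stage, each one itself rescaled by the similarity factors introduced at later stages. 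I would organize the bookkeeping so that the $j$-th iteration contributes a constant that is $1/(\text{parameter}_j)$ divided by the product of the squares of the parameters introduced afterward, matching the three displayed fractions.

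The key steps, in order, are: (i) apply Corollary \ref{corfl} to the continued fraction of Theorem \ref{T3} with the substitution $e\mapsto ep^2$ exactly as in the proof of Theorem \ref{T3ex}, producing a quasi-period-$6$ Tasoevian continued fraction whose partial quotients are of the shape $p-1,1,\ast-1,p-1,1,\ast-1$ after the negative entries are cleared using $[m,-n,\alpha]=[m-1,1,n-1,-\alpha]$ and $[m,n,0,p,\alpha]=[m,n+p,\alpha]$; (ii) feed the result back into Corollary \ref{corfl} with the parameter $q$, inserting blocks built from $q$ and $r$ and again clearing negatives; (iii) repeat once more with $r$, arriving at quasi-period $24$. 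At each stage the convergence hypothesis needed to invoke \eqref{ocf2} is supplied by Worpitzky's Theorem applied to the equivalent form \eqref{ocf}, since the partial numerators $p/a_i$, $1/(pa_i)$ (and their $q,r$ analogues) can be made $\le 1/4$ in modulus for the large partial quotients $eu^n,fv^n$ arising here.

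The main obstacle will be the third step: tracking the exact fixed blocks and the signs produced after three rounds of removing negative and zero partial quotients. Each application of \eqref{ocf2} introduces entries like $-a_{2n-1}/p^2$ and $-p$, and converting these to a regular (positive) continued fraction via the two reduction rules reshuffles neighboring entries, so the precise strings $r-1,1,q-1,r,p-1,1,r-1,q-1,1,r-1$ and their companions must be verified by a careful but essentially mechanical induction on the block structure. I would handle this by writing each iteration purely at the matrix level, composing the three matrix identities (one per parameter) into a single product relating the $A_i$'s of $[0;\overline{eu^n,fv^n}]$ to the $B_i$'s of \eqref{lcfeq}, and reading off the $B_i$ as the ordered factors; this avoids repeatedly re-deriving the continued-fraction reductions by hand and makes the constraints $r>2$, $q>1$, $p>1$ (needed for positivity of entries such as $r-2$ and $q-1$) transparent.
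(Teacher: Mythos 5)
Your overall strategy---iterating Corollary \ref{corfl} three times starting from Theorem \ref{T3}---is indeed the paper's strategy, but your execution plan has a genuine gap: in steps (i)--(iii) you clear the negative partial quotients after \emph{each} application of Corollary \ref{corfl}, and this destroys exactly the structure that the next application needs. Corollary \ref{corfl} turns $[0;a_1,a_2,a_3,\dots]$ into $[0;\overline{q,-a_{2n-1}/q^2,-q,a_{2n}}\,]$, so to keep integer entries one must be able to force $q^2$ to divide every odd-indexed partial quotient by rescaling a parameter. In the signed form $[0;\overline{p,-eu^n,-p,fv^n}\,]$ produced by the first application, the odd-indexed entries are exactly $\pm p$, so the substitution $p\mapsto pq^2$ accomplishes this. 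But in the regularized form $[0;\overline{p-1,1,eu^n-1,p-1,1,fv^n-1}\,]$ that you hand to stage (ii), the odd-indexed entries are $p-1$, $eu^n-1$ and $1$; no substitution makes all of these divisible by $q^2$ (the entry $1$ alone rules it out), so Corollary \ref{corfl} cannot be applied to your stage-(i) output and the iteration never gets past the second step.

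The paper's proof avoids this by never regularizing until the very end: writing $f(e)$ for the right side of \eqref{tas3}, it keeps the signed continued fractions throughout, performing the substitutions $e\mapsto ep^2$, then $p\mapsto pq^2$ (which is also why the coefficient of $e$ becomes $p^2q^4$ at the second stage, not $p^2q^2$ as you wrote), then $q\mapsto qr^2$, arriving at $[0;\overline{r,-q,-r,-p,r,q,-r,-eu^n,r,-q,-r,p,r,q,-r,fv^n}\,]_{n=1}^{\infty} = \tfrac1r+\tfrac1{qr^2}+\tfrac1{pq^2r^4}+f(ep^2q^4r^8)$, and only then removes all negatives in a single pass to obtain the quasi-period-$24$ expansion \eqref{lcfeq}. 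Your closing suggestion of composing the three matrix identities into one product can be made to work, but only if the composition is carried out on the signed forms, i.e.\ if the negative-removal is deferred to the end; as soon as you interleave regularization with the applications of Corollary \ref{corfl} you hit the divisibility obstruction above. The fix is simple to state: keep the negatives through all three applications and clear them once at the end.
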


\begin{proof}
For ease of notation, let
\[
f(e)= \left(\frac{1}{e u} -\frac{1}{e^2 f u^2 v  +e}\right)
\frac{\sum_{n=0}^{\infty}\displaystyle{ \frac{(e f)^{-n}
(uv)^{-n(n+3)/2}}{(1/uv;1/uv)_{n}(-1/efu^3 v^2;1/uv)_{n}} }}
{\sum_{n=0}^{\infty}\displaystyle{ \frac{(e f)^{-n}
(uv)^{-n(n+1)/2}}{(1/uv;1/uv)_{n}(-1/efu^2 v;1/uv)_{n}} }  },
\]
so that, by Theorem \ref{T3},
\[
[0; \overline{e u^{n},  f v^{ n}}]_{n=1}^{\infty}=f(e).
\]
Replace $e$ with $e p^2$ and, by Corollary \ref{corfl},
\[
 \left[0;
\overline{p,-e u^{n},-p,f v^{ n}}\,\,\right]_{n=1}^{\infty}
 = \frac{1}{p}+f(e p^2).
 \]
Replace $p$ with $p q^2$ and, again by Corollary \ref{corfl},
\[
 \left[0;
\overline{q,-p,-q,-e u^{n},q,p,-q,f v^{
n}}\,\,\right]_{n=1}^{\infty}
 = \frac{1}{q}+\frac{1}{pq^2}+f(e p^2q^4).
 \]
 Repeat this step once more, by replacing $q$ with $qr^2$, and then
\begin{multline*}
\left[0; \overline{r,-q,-r,-p,r,q,-r,-e u^{n},r,-q,-r,p,r,q,-r,f v^{
n}}\,\,\right]_{n=1}^{\infty}\\
 =\frac{1}{r}+ \frac{1}{q r^2}+\frac{1}{pq^2r^4}+f(e p^2q^4r^8).
\end{multline*}
Finally, remove the negatives from the continued fraction and
\eqref{lcfeq} follows.
\end{proof}

\allowdisplaybreaks{
}


\begin{thebibliography}{99}

\bibitem{BMLW06}
Bowman, Douglas; Mc Laughlin, James, Wyshinski, Nancy J.; \emph{A
$q$-continued fraction.}  International Journal of Number Theory
Volume \textbf{2} (2006), no. 4, 523-547.

\vspace{2pt}

\bibitem{H96}
Hurwitz, Adolf, \emph{\"Uber die Kettenbruche, deren Teilnenner
arithmetische Reihen bilden}.
 Vierteljahrsschrift d.naturforsch. Gesellschaft,
Zurich, Jahrgang 41, 1896 34--64.

\vspace{2pt}

\bibitem{H33}
Hurwitz, Adolf, \emph{\"Uber die Kettenbruchentwicklung der Zahl e}.
Math. Werke (1933), Band 2, 129-133.



\vspace{2pt}


\bibitem{K03a}Komatsu, Takao,
\emph{On Tasoev's continued fractions.} Math. Proc. Cambridge
Philos. Soc. \textbf{134} (2003), no. 1, 1--12.

\vspace{2pt}

\bibitem{K03b}Komatsu, Takao,
\emph{On Hurwitzian and Tasoev's continued fractions.} Acta Arith.
\textbf{107} (2003), no. 2, 161--177.

\vspace{2pt}

\bibitem{K04}
Komatsu, Takao, \emph{Tasoev's continued fractions and
Rogers-Ramanujan continued fractions}. J. Number Theory \textbf{109}
(2004), no. 1, 27--40.

\vspace{2pt}

\bibitem{K05}
Komatsu, Takao, \emph{Hurwitz and Tasoev continued fractions}.
Monatsh. Math. \textbf{145} (2005), no. 1, 47--60.

\vspace{2pt}

\bibitem{K06}
Komatsu, Takao, \emph{Hurwitz and Tasoev continued fractions with
long period}. Math. Pannon. \textbf{17} (2006), no. 1, 91--110.

\vspace{2pt}

\bibitem{K06a}
 Komatsu, Takao, \emph{Continued fraction of $e\sp 2$ with
confluent hypergeometric functions}. Liet. Mat. Rink. \textbf{46}
(2006), no. 4, 513--531.

\vspace{2pt}

\bibitem{K07}
Komatsu, Takao, \emph{Hurwitz continued fractions with confluent
hypergeometric functions}. Czechoslovak Math. J. \textbf{57(132)}
(2007), no. 3, 919--932.

\bibitem{K08}
Komatsu, Takao, \emph{More on Tasoev and Hurwitz continued
fractions}. - to appear.



\bibitem{L70}
Lambert, J. L. \emph{Beitr\"{a}ge zum Gebrauch der Mathematik und
deren Anwendung}. Theil 2. Berlin, 1770

\bibitem{L18}
Lehmer, D. N.; \emph{Arithmetical Theory of Certain Hurwitzian
Continued Fractions}. Amer. J. Math. \textbf{40} (1918), no. 4,
375--390.

\vspace{2pt}

\bibitem{L73}
Lehmer, D. H.; \emph{Continued fractions containing arithmetic
progressions.} Scripta Math. \textbf{29}, 17--24. (1973).

\vspace{2pt}


\bibitem{LW92}
Lisa Lorentzen and  Haakon Waadeland, \emph{Continued fractions with
applications}. Studies in Computational Mathematics, 3.
North-Holland Publishing Co., Amsterdam, 1992, 35--36, 67--68, 94.

\vspace{2pt}




\vspace{2pt}

\bibitem{MLW05}
Mc Laughlin, J.; Wyshinski, Nancy J. \emph{Ramanujan and the regular
continued fraction expansion of real numbers}. Math. Proc. Cambridge
Philos. Soc. \textbf{138} (2005), no. 3, 367--381.





\bibitem{T84}
B. G.  Tasoev, \emph{Certain problems in the theory of continued
fractions.}
 (Russian)
Trudy Tbiliss. Univ. Mat. Mekh. Astronom. No. 16-17 (1984), 53--83.

\vspace{2pt}


\bibitem{T00}
B. G.  Tasoev, \emph{On rational approximations of some numbers.
}(Russian) Mat. Zametki \textbf{67} (2000), no. 6, 931--937;
 translation in Math. Notes \textbf{67} (2000), no. 5-6, 786--791.

\vspace{2pt}


\bibitem{VDP94} A. J.   van der Poorten,
\emph{Explicit Formulas for Units in Certain Quadratic Number
Fields.} Algorithmic number theory (Ithaca, NY, 1994), 194--208,
Lecture Notes in Comput. Sci., 877, Springer, Berlin, 1994.




\end{thebibliography}
\end{document}